\documentclass[a4paper,reqno,11pt]{amsart}
\usepackage{amssymb}
\usepackage{amsmath}
\usepackage{mathrsfs}
\usepackage[usenames]{color}
\usepackage[colorlinks=true, citecolor=blue, anchorcolor=red]{hyperref}
\usepackage{enumerate}
%\usepackage{stmaryrd}
%\usepackage{bm}
%\nonstopmode
%\usepackage{pdfsync}
%\usepackage{hyperref}

%\usepackage{showkeys}

\numberwithin{equation}{section}
%\swapnumbers

%%%%%%%%%%%%%%%%%%%%%%%%%%%%%%%%%%%%
%       \long\def\@makecaption#1#2{%
%         \vskip\abovecaptionskip
%         \sbox\@tempboxa{#1: #2}%
%       %  \ifdim \wd\@tempboxa >\hsize
%          {\bf #1}: dfgs #2\par
%         \else
%           \global \@minipagefalse
%           \hb@xt@\hsize{\hfil\box\@tempboxa\hfil}%
%         \fi
%         \vskip\belowcaptionskip}
%%%%%%%%%%%%%%%%%%%%%%%%%%%%%%%%%%
%%%%%%%%%%%%%%%%%%%%%%%%%%%%%%%%%%%%%%%%%%%%%%%%%%%%%%%%%%%%%%%%%%%%%%%%%%%
\allowdisplaybreaks

\begin{document}
%\renewcommand{\footnote}{}
%\pagenumbering{roman}
%\thispagestyle{empty}%\quad\newpage
%\thispagestyle{empty}
%\nonstopmode
%**************************************************************************

\newtheorem{theorem}{Theorem}[section] % Nummerierung
\newtheorem{proposition}[theorem]{Proposition}
\newtheorem{corollary}[theorem]{Corollary}
\newtheorem{lemma}[theorem]{Lemma}

\theoremstyle{definition}
\newtheorem{assumption}[theorem]{Assumption}
\newtheorem{definition}[theorem]{Definition}

\theoremstyle{definition} %%{remark}
\newtheorem{remark}[theorem]{Remark}
\newtheorem{remarks}[theorem]{Remarks}
\newtheorem{example}[theorem]{Example}
\newtheorem{examples}[theorem]{Examples}
%**************************************************************************
\newenvironment{pf}%
{\begin{sloppypar}\noindent{\bf Proof.}}%
{\hspace*{\fill}$\square$\vspace{6mm}\end{sloppypar}}
%**************************************************************************
\def\bA{{\bm A}}
\def\bB{{\bm B}}
\def\mE{{\mathbb E}}
\def\mK{{\mathbb K}}
\def\hmE{{\widehat{\mathbb E}}}
\def\mEp{{\mathbb E}_{\phi}}
\def\mFp{{\mathbb F}_{\phi}}
\def\mEpp{{\mathbb E}_{\phi,\mP}}
\def\tmEp{\widetilde{\mathbb E}_{\phi}}
\def\tmEpp{\widetilde{\mathbb E}_{\phi,P}}
\def\mFpp{{\mathbb F}_{\phi,\mP}}
\def\tPhi{\widetilde{\Phi}}
\def\mF{{\mathbb F}}
\def\mG{{\mathbb G}}
\def\mX{{\mathbb X}}
\def\mP{{\mathbb P}}
\def\db{\|}
\def\r{Nr}
\def\R{{\mathbb R}}
\def\bR{{\mathbb R}}
\def\N{{\mathbb N}}
\def\C{{\mathbb C}}
\def\Q{{\mathbb Q}}
\def\mP{{\mathbb P}}
\def\Z{{\mathbb Z}}
\def\mH{\mathbb H}
\def\mA{\mathbb A}
\def\mT{\mathbb T}
\def\bX{\mathbb X}
\def\bY{\mathbb Y}
\def\bZ{\mathbb Z}
\def\D{{\mathcal D}}
\def\cB{{\mathcal B}}
\def\E{{\mathcal E}}
\def\cF{{\mathcal F}}
\def\cA{{\mathcal A}}
\def\cH{{\mathcal H}}
\def\G{{\mathcal G}}
\def\B{{\mathcal B}}
\def\I{{\mathcal I}}
\def\M{{\mathcal M}}
\def\O{{\mathcal O}}
\def\S{{\mathcal S}}
\def\cT{{\mathcal T}}
\def\cP{{\mathcal P}}
\def\L{{\mathcal L}}
\def\cK{{\mathcal K}}
\def\cJ{{\mathcal J}}
\def\cS{{\mathcal S}}
\def\bH{{\bf H}}
\def\bP{{\bf P}}
\def\bQ{{\bf Q}}
\def\bE{{\bf E}}
\def\bT{{\bf T}}
\def\W{W}
\def\Be{L_\infty}
\def\cR{{\mathcal R}}
\def\eps{\varepsilon}
\def\3{{\ss}}
\def\slim{s-\lim_}
\def\capa{{\mathrm{Cap}}}
\def\supp{{\mathrm{supp}}}
\def\esssup{{\mathrm{ess\,sup}}}
\def\absconv{{\mathrm{absconv}}}
\def\dom{{\mathrm{dom}}}
\def\loc{\mathrm{loc}}
\def\hs{half-space}
\def\HIC{$\HH^\infty$-calculus}
\def\BIP{{\mathrm{BIP}}}
\def\BUC{{\mathrm{BUC}}}
\def\BC{{\mathrm{BC}}}
\def\MR{{\mathcal{MR}}}
\def\const{{\mathrm{const\,}}}
\def\Re{{\mathrm{Re}}}
\def\re{{\mathrm{Re}}}
\def\Im{{\mathrm{Im}}}
\def\im{{\mathrm{Im}}}
\def\dd{{\mathrm d}}
\def\graph{{\mathrm{graph}}}
\def\e{{\mathrm{e}}}
\def\id{{\mathrm{id}}}
\def\sb{{\mathrm{sb}}}
\def\FM{{\mathrm{FM}}}
\def\hperp{{^{_\perp}}}
\def\HIC{{$H^\infty$-calculus}}
\def\hW{\widehat{W}}
\def\hu{\hat{u}}
\def\hv{\hat{v}}
\def\hp{\hat{p}}
\def\hsigma{\hat{\sigma}}
\def\hf{\hat{f}}
\def\hh{\hat{h}}
\def\hg{\hat{g}}
\def\heta{\hat{\eta}}
\def\dR{\dot{\R}}
\def\tu{\tilde{u}}
\def\tc{\tilde{c}}
\def\tp{\tilde{p}}
\def\tf{\tilde{f}}
\def\th{\tilde{h}}
\def\tg{\tilde{g}}
\def\tv{\tilde{v}}
\def\ta{\tilde{a}}
\def\ty{\widetilde{y}}
\def\bv{\bar{v}}
\def\bw{\bar{w}}
\def\tsigma{\tilde{\sigma}}
\def\hphi{\hat{\phi}}
\newcommand{\essinf}[1]{{\mathrm{ess}}\!\inf_{\!\!\!\!\!\!\!\!\! #1}}
\newcommand{\fn}{\footnote}
\def\mdt{mixed derivative theorem}
\def\div{{\mathrm {div\,}}}
\def\bsigma{\bar{\sigma}}
\def\brho{\bar{\rho}}
\def\rcv{\W^1_p(J;L^p(\R^{n+1}_+))
        \cap L_p(J;\W^2_p(\R^{n+1}_+))}
\def\rcvtp{\W^1_p(J;L^p(\dR^{n+1}))
        \cap L_p(J;\W^2_p(\dR^{n+1}))}
\def\rcs{W^{3/2-1/2p}_p(J;L_p(\R^n))
        \cap \W^1_p(J;W^{1-1/p}_p(\R^n))
        \cap L_p(J;W^{2-1/p}_p(\R^n))}
\def\rcf{L_p(J;L_p(\R^{n+1}_+))}
\def\rcftp{L_p(J;L_p(\dR^{n+1}))}
\def\rch{W^{1/2-1/2p}_p(J;L_p(\R^n))
        \cap L_p(J;W^{1-1/p}_p(\R^n))}
\def\rcvi{W^{2-2/p}_p(\R^{n+1}_+)}
\def\rcvitp{W^{2-2/p}_p(\dR^{n+1})}
\def\rcsi{W^{2-2/p}_p(\R^n)}
\newcommand{\ab}{&\hskip-2mm}

\def\en{{\talloblong}}
\def\hookd{\stackrel{_d}{\hookrightarrow}}
\def\hook{{\hookrightarrow}}
\def\vt{{\vartheta}}
\def\ovt{{\overline{\vartheta}}}
\def\THE{tornado-hurricane equations}
\def\THO{tornado-hurricane operator}
\def\NSE{Navier-Stokes equations}
\def\SO{Stokes operator}
\def\HHP{Helmholtz projection}
\def\HHD{Helmholtz decomposition}
\def\HOL{\mathrm{HOL}}
\def\la{{\langle}}
\def\ra{{\rangle}}
\def\vphi{{\varphi}}
\def\vdp{{\{k:\ \alpha^v_k=0\}}}
\def\vndp{{\{k:\ \alpha^v_k\neq0\}}}
\def\tdp{{\{k:\ \alpha^\vt_k=0\}}}
\def\tndp{{\{k:\ \alpha^\vt_k\neq0\}}}
\def\sD{{\mathscr D}}
\def\hsD{{\widehat{\mathscr D}}}
\def\sL{{\mathscr L}}
\def\sR{{\mathscr R}}
\def\sT{{\mathscr T}}
\def\sLis{{\mathscr L}_{is}}
\def\PPr{{\PP_{\!\rho}}}
\def\ou{{\overline{u}}}
\def\oq{{\overline{q}}}
\def\oU{{\overline{U}}}
\def\th{{T\!H}}
\def\ttau{{\tilde \tau}}
\def\hH{{\widehat{H}}}
\def\cD{{\mathcal D}}
\def\vp{{\varphi}}
\def\Hic{{\mathcal{H}^\infty}}

\newcommand{\ord}{\operatorname{ord}}

\hyphenation{Lipschitz}

%\newcommand\fn[1]{}

%**************************************************************************
\sloppy
%**************************************************************************
\title[Fluid-structure interaction]
{$L^p$-theory for a fluid-structure interaction model}

\author[R. Denk]{Robert Denk}
\address{Fachbereich Mathematik und Statistik,  Universit\"at Konstanz,
78457 Konstanz, Germany}
\email{robert.denk@uni-konstanz.de}

\author[J. Saal]{J\"urgen Saal}
\address{Mathematisches Institut, Angewandte Analysis\\
         Heinrich-Heine-Uni\-ver\-sit\"at D\"usseldorf\\
         40204 D\"usseldorf, Germany}
\email{juergen.saal@hhu.de}

\date{September 20, 2019}
%\thanks{class-stefan75.tex}
\thispagestyle{empty}
%\pagestyle{myheadings}
%\markboth{\today}{\today}
%\setlength{\parindent}{0mm}
%\setlength{\parskip}{10cm}
\parskip0.5ex plus 0.5ex minus 0.5ex
%\bibliographystyle{plain}
%\bibliographystyle{alpha}
%\bibliographystyle{amsalpha}
%\setcounter{page}{3}
%\listoffigures

\begin{abstract}
We consider a fluid-structure interaction model for an incompressible fluid where the elastic response of the free boundary is given by a damped Kirchhoff plate model. Utilizing  the Newton polygon approach,  we first prove
maximal regularity in $L^p$-Sobolev spaces  for a linearized version.
Based on this, we show existence and uniqueness of the  strong solution of the nonlinear system
for small data.
\end{abstract}

\subjclass{Primary 35Q30; Secondary 74F10, 76D05, 35K59}
\keywords{Fluid-structure interaction, maximal regularity, Newton polygon}

%%%%%%%%%%%%%%%%%%%%%%%%%%%%%%%%%%%%
%{\bf Keywords.}  \\[4mm]
%{\bf 2000 Mathematics Subject Classification.}
%Primary ; Secondary

%\tableofcontents
\maketitle

%%%%%%%%%%%%%%%%%%%%%%%%%%%%%%%%%%%%%%%%%%%%%%%%%%%%%%%%%%%%%%%%%%%%%%%%%%%%%%%%
\section{Introduction and main result}\label{secintro}
%%%%%%%%%%%%%%%%%%%%%%%%%%%%%%%%%%%%%%%%%%%%%%%%%%%%%%%%%%%%%%%%%%%%%%%%%%%%%%%%

We consider the system
\begin{equation}
	\label{fsi}
	\left.
	\begin{array}{rcll}
		\rho (\partial_t u + (u \cdot \nabla) u)) -
		\mbox{div}\,T(u,\,q) & = & 0,
		& \quad t > 0,\    x \in \Omega(t), \\[0.5em]
		                                                   \mbox{div}\,u & = & 0,                           & \quad t > 0,\    x \in \Omega(t), \\[0.5em]
		                            u & = & V_\Gamma, & \quad t \geq 0,\ x \in \Gamma(t), \\[0.5em]
		                           \frac1{\nu\cdot e_n}e_n^\tau T(u,\,q)\nu& = &
					   \phi_\Gamma,  & \quad t \geq 0,\ x \in \Gamma(t), \\[0.5em]
		                                \Gamma(0) = \Gamma_0,
						\quad
						V_\Gamma(0)=V_0,
						\quad u(0) & = & u_0,
						& \quad x \in \Omega(0),
	\end{array}
	\right\}
\end{equation}
which represents a (one-phase) fluid-structure interaction model.
The fluid with density $\rho>0$ and viscosity $\mu>0$ occupies at a time $t \geq 0$ the region $\Omega(t) \subseteq
\bR^n$ with boundary $\Gamma(t)=\partial\Omega(t)$.
Furthermore,
we assume the fluid to be incompressible, and we assume the stress to be given as
\begin{equation*}
	T(u,\,q) = 2 \mu D(u) - q, \qquad D(u) = {\textstyle
	\frac{1}{2}} (\nabla u + (\nabla u)^{\tau}).
\end{equation*}
The unknowns in the model are the velocity $u$, the pressure $q$ and the interface $\Gamma$.
We denote by $\nu$ the exterior unit normal field at $\Gamma$, by
$V_\Gamma$ the velocity of the boundary $\Gamma$, and by $e_j$ the
$j$-th standard basis vector in $\R^n$, i.e.\ $e_n=(0,\cdots,0,1)$.

The function $\phi_\Gamma$ describes the elastic response at $\Gamma$ which is given by a damped Kirchhoff-type plate model.
Throughout the paper we assume that $\Gamma$ is given as a graph of a
function $\eta:\R_+\times\R^{n-1}\to \R$, that is
\begin{equation}\label{gammagraph}
	\Gamma(t) = \Big\{(x',\eta(t,x'));\  x'\in
	\R^{n-1}\Big\}, \quad t \geq 0,
\end{equation}
and that $\Gamma(t)$ is sufficiently flat. Thus $\Omega(t)$ is a perturbed upper half-plane.In these coordinates, the elastic response is given  as
\begin{equation}\label{phigamma}
	\phi_\Gamma=m(\partial_t,\partial')\eta
	:=\partial_{t}^2\eta+\alpha(\Delta')^2\eta-\beta\Delta'\eta
	-\gamma\partial_t\Delta'\eta
\end{equation}
for $\alpha,\gamma>0$, $\beta\in\R$,
 where $\Delta'$ stands for the Laplacian in $\R^{n-1}$.
Finally, the initial configuration and velocity of the interface resp.\
the initial fluid velocity are given by $\Gamma_0$ and $V_0$ resp.\
$u_0= (u_0',u_0^n)$. We remark that in the formulation of the boundary
conditions in lines 3 and 4 of \eqref{fsi}, one has to take into account
that the Kirchhoff plate model is formulated in a Lagrangian setting,
whereas for the fluid an Eulerian setting is used. This is discussed in
more detail in the beginning of Section 2.

The symbol of $m(\partial_t,\partial')$ is given as
\[
	m(\lambda,\xi')=
	\lambda^2+\alpha|\xi'|^4+\beta|\xi'|^2
	+\gamma\lambda|\xi'|^2,\quad \lambda\in\C,\ \xi'\in\R^{n-1},
\]
which vanishes if
\[
	\lambda=-\frac{\gamma|\xi'|^2}{2}\pm\sqrt{\frac{\gamma^2|\xi'|^4}{4}-\alpha
	|\xi'|^4-\beta|\xi'|^2}.
\]
For $\gamma>0$ , the roots of $m(\cdot,\xi')$ lie in some sector which is a subset of $\{\lambda\in\C: \Re\lambda <0\}$. This indicates that the term
$-\gamma\partial_t\Delta'\eta$ in $\phi_\Gamma$ parabolizes the problem. Physically, one also speaks of structural damping of the plate.

We notice that basically the same results as proved in this note
can be expected by considering
layer like domains or rectangular type domains with periodic lateral
boundary conditions. For simplicity, however, we restrict the approach
given here to the just introduced geometry.

Model (\ref{fsi}) was introduced in \cite{Quarteroni2000} in connection
to applications to cardiovascular systems. In the 2D case, this system was investigated in \cite{batak2017} in the $L^2$-setting.
In fact, in \cite[Proposition~3.12]{batak2017} it is proved that
the linear operator associated to (\ref{fsi}) generates an analytic
$C_0$-semigroup in a suitable Hilbert space setting. This exhibits the
parabolic character of the problem. Therefore, it is reasonable to
consider $L^p$-theory for the system (\ref{fsi}) which is the main
purpose of this note.

Alternative approaches to system (\ref{fsi}) in the $L^2$-setting
also for the hyperbolic-parabolic case, i.e.\ $\gamma=0$,
are given, e.g., in \cite{cdeg2005,grandmont2008,Lengeler-Ruzicka14,Lengeler,Muha-Canic15},
concerning weak solutions and, e.g.,
in \cite{bdav2004,Coutand-Shkoller06,lequeurre2011,lequeurre2013}
concerning (local) strong solutions. A more recent
approach in an
two-dimensional $L^2$-framework concerning global strong solutions
is presented in \cite{grahil2016}. In the present paper, we develop an $L^p$-approach in general dimension for  system (\ref{fsi}). We show the existence of strong solutions for small data and give a precise description of the maximal regularity spaces for the unknowns. More precisely, we prove the following main result for (\ref{fsi}).
%%%%%%%%%%%%%%%%%%%%%%%%%%%%%%%%%%%%%%%%%%%%%%%%%%%%%%%%%%%%%%%%%%%%%%%%
\begin{theorem}\label{main}
Let $n\ge 2$, $p\ge (n+2)/3$, $T>0$, and $J=(0,T)$.
Assume that
\[
	\|u_0\|_{W^{2-2/p}_p(\Omega(0))}
	+\|\eta_0\|_{W^{5-3/p}_p(\R^{n-1})}
	+\|\eta_1\|_{W^{3-3/p}_p(\R^{n-1})}
	<\kappa,
\]
where $\Gamma_0=\graph(\eta_0)$ and $V_0=\graph(\eta_1)$,
for some $\kappa>0$.
Then,
there exists a unique solution $(u,q,\Gamma)$ of system (\ref{fsi})
such that $\Gamma=\graph(\eta)$
and such that
\begin{align*}
	u&\in H^1_p(J;L^p(\Omega(t)))\cap
	L^p(J;H^2_p(\Omega(t))),\\
	q&\in L^p(J;\dot{H}^1_p(\Omega(t))),\\
	\eta&\in \mE_\eta :=W_{p}^{9/4 - 1/(4p)}(J; L^p(\R^{n-1}))
	\cap H_{p}^2(J; W_p^{1-1/p}(\R^{n-1}))\\
  & \quad \cap L^p(J;W_p^{5-1/p}(\R^{n-1})),
\end{align*}
provided that $\kappa=\kappa(T)$ is small enough
and that the following compatibility conditions are satisfied:
\begin{enumerate}
\item $ \div u_0 = 0$,
\item if  $p>\tfrac32$, then $u_0'|_{\Gamma_0} = 0$
and $u_0^n|_{\Gamma_0} - \eta_1=0$ almost everywhere,
\item there exists an $\eta_*\in\mE_\eta$ with $\eta_*|_{t=0} = \eta_0$, $\partial_t\eta_*|_{t=0}=\eta_1$ and
    \[
      \partial_t \eta_*\in H_{p}^1(\R_+; {}_0\dot
      H_p^{-1}(\R^n_+)),
    \]
    where
\[
	\partial_t \eta_*(\phi) := - \int_{\R^{n-1}} \partial_t \eta_*\phi
	dx',\quad \phi\in\dot H_{p'}^1(\R^n_+).
\]
\end{enumerate}
The solution depends continuously on the data.
\end{theorem}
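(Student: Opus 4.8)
The plan is to reduce the moving-domain problem \eqref{fsi} to a problem on the fixed half-space $\R^n_+$ via the Hanzawa transform associated to the height function $\eta$, linearize around the reference configuration, solve the linear problem by maximal regularity, and close the nonlinear estimate by a contraction argument in the maximal regularity spaces.

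\textbf{Step 1 (transformation to a fixed domain).} Since $\Gamma(t)=\graph(\eta(t,\cdot))$ is assumed flat, introduce the diffeomorphism $\Theta_\eta:\R^n_+\to\Omega(t)$, $(x',x_n)\mapsto(x',x_n+\chi(x_n)\eta(t,x'))$ with a suitable cutoff $\chi$, and pull back $u$ and $q$ to functions $\bar u,\bar q$ on the fixed half-space. The incompressibility condition, the stress boundary condition on line 4, the kinematic condition $u=V_\Gamma$ on line 3, and the plate equation \eqref{phigamma} then all transform into a coupled system on $\R^n_+\times J$ whose principal part is linear and whose remaining terms are nonlinear but of lower order, collecting into a ``right-hand side'' $F(\bar u,\bar q,\eta)$ that is (formally) quadratic in the unknowns and their admissible derivatives. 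One has to be careful that the Eulerian/Lagrangian mismatch mentioned after \eqref{fsi} is handled correctly here; the transformed kinematic and dynamic coupling should read, at leading order, $\partial_t\eta=\bar u^n|_{x_n=0}$ and $m(\partial_t,\partial')\eta=-e_n^\tau T(\bar u,\bar q)e_n|_{x_n=0}$, modulo lower-order terms.

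\textbf{Step 2 (maximal regularity for the linearization).} This is the heart of the paper and is exactly where the Newton-polygon machinery announced in the abstract enters: the coupled linear system (Stokes in $\R^n_+$ with a dynamic boundary condition of Kirchhoff-plate type, coupled through $\partial_t\eta=\bar u^n$) has a boundary symbol which is not homogeneous, so one analyzes it via the Newton polygon of the full symbol, obtains $\mathcal R$-bounded resolvent/solution operators on the relevant $L^p$-Sobolev scale, and reads off that the solution operator is an isomorphism between the data space (with the compatibility conditions (1)--(3)) and the solution space $H^1_p(J;L^p)\cap L^p(J;H^2_p)$ for $\bar u$, $L^p(J;\dot H^1_p)$ for $\bar q$, and the triple-intersection space $\mE_\eta$ for $\eta$. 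The fractional order $9/4-1/(4p)$ in $\mE_\eta$ and the constraint $p\ge(n+2)/3$ should both emerge from the Newton polygon: the former is the time-trace/interpolation exponent dictated by the slope of the polygon, the latter is the threshold below which the nonlinear terms in $F$ fail to map back into the data space. I will simply invoke the linear maximal regularity result proved earlier in the excerpt (the ``main result for a linearized version'') as a black box: there is a bounded solution operator $\mathcal S$ for the linear problem, with norm independent of $T$ up to the usual time-weight considerations, or at least with a controlled $T$-dependence so that $\kappa=\kappa(T)$ may be chosen in the end.

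\textbf{Step 3 (nonlinear estimates and fixed point).} Set up the fixed-point map $\Phi=\mathcal S\circ F$ on a ball $B_r\subset\mE$ in the solution space (with $\mE$ the product of the three spaces above), centered at an extension $(u_*,q_*,\eta_*)$ of the data (here condition (3) provides the needed reference $\eta_*$ with the correct regularity, in particular $\partial_t\eta_*\in H^1_p(\R_+;{}_0\dot H_p^{-1}(\R^n_+))$, which is precisely what makes the divergence/pressure part of $F$ well-defined and small). The key estimates are: (a) $\|F(z)\|_{\text{data}}\le C(\|z\|_\mE)\|z\|_\mE^2$ type bounds, exploiting that $F$ vanishes to second order and that, for $p\ge(n+2)/3$, the maximal regularity space $\mE$ embeds into the function spaces (bounded, or $\BUC$, or suitable H\"older/Lebesgue spaces via mixed-derivative and trace theorems) needed to control the products, convection term $(\bar u\cdot\nabla)\bar u$, the pulled-back stress, and the Hanzawa-Jacobian terms; (b) a matching Lipschitz estimate $\|F(z_1)-F(z_2)\|_{\text{data}}\le C(r)(\|z_1\|_\mE+\|z_2\|_\mE)\|z_1-z_2\|_\mE$. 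Choosing first $r$ small, then $\kappa$ (hence the data, hence $\|(u_*,q_*,\eta_*)\|_\mE$) small depending on $r$ and $T$, one gets that $\Phi$ maps $B_r$ into itself and is a contraction; the Banach fixed point theorem yields the unique solution, and its continuous dependence on the data follows from the Lipschitz estimate together with boundedness of $\mathcal S$. Finally one transforms back via $\Theta_\eta$ to obtain $(u,q,\Gamma)$ on the original moving domain.

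\textbf{Main obstacle.} Apart from quoting the linear theorem, the genuine difficulty is the nonlinear mapping property (a): one must verify that every term produced by the Hanzawa transform and by the convective nonlinearity — including the low-regularity pressure contribution living in $\dot H^{-1}_p$ and the boundary terms of the transformed plate/stress coupling — lands in the correct (often anisotropic, fractional-order) trace space with a gain that makes it quadratically small. This forces the sharp embeddings of $\mE_\eta$ and of the parabolic Stokes space, and the threshold $p\ge(n+2)/3$ is exactly the borderline case of these embeddings; checking it for the worst term (presumably the normal stress coupling against $\mE_\eta$, or the convection term at the boundary) is the step that requires the most care.
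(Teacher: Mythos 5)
Your proposal follows essentially the same route as the paper: pull back to the fixed half-space via the vertical shift $(x',x_n)\mapsto(x',x_n+\eta(t,x'))$ (the paper dispenses with the cutoff $\chi$, since the domain is a perturbed half-space so the plain shift is already a global diffeomorphism), establish maximal regularity for the linearization by the Newton-polygon approach, and close by a contraction argument exploiting that the nonlinearity vanishes to second order, with the threshold $p\ge(n+2)/3$ coming from the anisotropic product embeddings. The one point you gloss over is that the nonlinear forcing must be checked to preserve the divergence-type compatibility condition (C4) along the iteration; the paper verifies this explicitly by showing $(\nabla'\eta\cdot\partial_n v',0)\in H^1_p(J;\dot H^{-1}_{p,0}(\R^n_+))$ using the no-slip trace $v'|_{x_n=0}=0$ and an integration by parts in $x_n$.
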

%%%%%%%%%%%%%%%%%%%%%%%%%%%%%%%%%%%%%%%%%%%%%%%%%%%%%%%%%%%%%%%%%%%%%%%%

\begin{remark}
  a) The compatibility conditions (1)--(3) are natural in the sense that they are also necessary for the existence of a strong solution. Condition (3) appears in a similar way for the two-phase Stokes problem, see, e.g., \cite{Pruess-Simonett16}, Section~8.1.

  b) We remark that the maximal regularity space $\mE_\eta$ for $\eta$
  describing the boundary is not a standard space. It is given as an
  intersection of three Sobolev spaces. This is due to the fact that the
  symbol of the complete system has an inherent inhomogeneous structure,
  and therefore the Newton polygon method is the correct tool
  to show maximal regularity. For the details, see Section~3 below.

  c) We note that in the physically relevant situations $n=2$ and $n=3$,
  the case $p=2$ is included. This might be of importance when
  considering the singular limit $\gamma\to0$ for vanishing
  damping of the plate.

  d) We formulated the result in the form of existence for fixed time and small data. By the same methods, one can also show short time existence for
 arbitrarily large data.
\end{remark}

The proof of Theorem~\ref{main} is based on several ingredients: First,
we transform the system to a fixed domain and consider the linearization
of the transformed system. By an application of the Newton polygon
approach (see, e.g., \cite{Denk-Kaip13} and \cite{Denk-Saal-Seiler08}),
we obtain maximal regularity for the linearized system. To deal with the nonlinearities, we employ embedding results on anisotropic Sobolev spaces given in \cite{koehnesaal}.

%%%%%%%%%%%%%%%%%%%%%%%%%%%%%%%%%%%%%%%%%%%%%%%%%%%%%%%%%%%%%%%%%%%%%%%%%%%%%%%%
\section{The transformed system}\label{sectrans}
%%%%%%%%%%%%%%%%%%%%%%%%%%%%%%%%%%%%%%%%%%%%%%%%%%%%%%%%%%%%%%%%%%%%%%%%%%%%%%%%

We start with a short discussion of the boundary conditions, where the Eulerian approach for the fluid has to be coupled with the Lagrangian description for the plate (see also  \cite{Lengeler-Ruzicka14} and \cite{grandmont2008}).  Let $\Gamma$ be given as in (\ref{gammagraph}) and assume that $\eta$ is sufficiently smooth. Following the Kirchhoff plate model, in-plate motions are ignored, and the velocity of the plate at the point $(x',\eta(t,x'))^\tau$ is parallel to the  vertical direction and given by $(0,\partial_t \eta(t,x'))^\tau = \partial_t   \eta(t,x') e_n$. As the fluid is assumed to adhere to the plate, we have no-slip boundary conditions for the fluid, and the equality  of the velocities yields the first boundary condition
\begin{equation}
  \label{eq1-1}
  u(t,x',\eta(t,x')) = \partial_t   \eta(t,x') e_n \quad (t>0, \, x'\in \R^n).
\end{equation}
The exterior normal at the point $(x',\eta(t,x'))$ of the boundary $\Gamma(t)$ is given by
\[
	\nu=\nu(t,x')=\frac1{\sqrt{1+|\nabla'\eta(t,x')|^2}}
	\left(
	\begin{array}{c}
	\nabla'\eta(t,x')\\
	-1
	\end{array}
	\right).
\]
We define the transform of variables
$$
\theta : J \times \R^{n}_+ \to
  \bigcup_{t \in J} \{t\} \times \Omega(t), \ (t,x',x_n)\mapsto\theta(t,x',x_n):=
  (t,x',x_n+\eta(t,x')).
$$
Obviously we have $\theta^{-1}(t,x',y)=(t,x',y-\eta(t,x'))$. As it was discussed in \cite{Lengeler-Ruzicka14}, Section~1.2,
the  force $F$ exerted by the
fluid on the boundary is given by the evaluation of the stress tensor at the deformed
boundary in the direction of the inner normal $-\nu(t,x')$. More precisely, we obtain (\cite{Lengeler-Ruzicka14}, Eq. (1.4))
\[ F = - \sqrt{1+|\nabla'\eta(t,x')}\; e_n^\tau  (T(u,q)\circ \theta(t,x)) \nu(t,x'). \]
As $\sqrt{1+|\nabla' \eta|^2} = - \nu(t,x')\cdot e_n$, the equality of the forces gives the second boundary condition
\begin{equation}
  \label{eq1-2}
  \begin{aligned}
  \frac{1}{\nu(t,x')\cdot e_n} \; e_n^\tau [ T(u,q) ] (t,x',\eta(t,x')) \, \nu(t,x') & = [m(\partial_t,\partial')\eta] (t,x')\\ & \qquad (t>0,x'\in\R^{n-1}).
  \end{aligned}
\end{equation}
Conditions \eqref{eq1-1} and \eqref{eq1-2} are the precise formulation of the boundary conditions in \eqref{fsi}.

To solve the problem \eqref{fsi}, we first note that by a re-scaling argument we may assume that
$\rho=\mu=1$ for the density $\rho$ and viscosity $\mu$ from now on.
Next, we transform the problem (\ref{fsi}) to a problem on the fixed
half-space $\R^n_+$, using the above transformation $\theta$. To this end, we set $J:=(0,T)$ and write
$x=(x',x_n)\in\R^n_+$ with $x'\in\R^{n-1}$. With the
corresponding meaning we write $v'$, $\nabla'$, etc.
The pull-back is then defined as
\[
	v:=\Theta^\ast u:= u\circ\theta, \qquad
	p:= \Theta^\ast q := q\circ\theta,
\]
and correspondingly the push-forward as
\[
	u:=\Theta_\ast v:= v\circ\theta^{-1}, \qquad
	q:= \Theta_\ast p := p\circ\theta^{-1}.
\]
We also set $\Gamma_0=\Gamma(0)=\{(x',\eta_0(x'));\ x'\in\R^{n-1}\}$ and
$V_0=V_\Gamma(0)=(0,\eta_1(\cdot))^\tau$.

Applying the transform of variables to (\ref{fsi}) leads to the following
quasilinear system for $(v,p,\eta)$:
\begin{equation}
	\label{tfsi}
	\begin{array}{rclll}
		                               \partial_t v -
					       \Delta v + \nabla
					       p& = & F_v(v,p,\eta)
					       & \mbox{in} & J
					       \times \R^n_+, \\
		                                               \div v &
							       = &
		G(v,\eta)&  \mbox{in} & J \times \R^n_+, \\
		v'&=&0
		& \mbox{on} & J \times \R^{n-1},      \\
		\partial_t\eta - v^n & = & 0
		&  \mbox{on} & J \times \R^{n-1},      \\
		  -2 \partial_n v^n + p
		  -m(\partial_t,\partial')\eta& = & H_\eta(v,\eta)
		  &  \mbox{on} & J \times \R^{n-1},      \\
		          v|_{t=0} &=& v_0  &\mbox{in} & \R^n_+,\\
			  \eta|_{t=0}&= & \eta_0
			  & \mbox{in} & \R^{n-1},\\
			  \partial_t\eta|_{t=0} & = & \eta_1
			  & \mbox{in} & \R^{n-1}.
	\end{array}
\end{equation}
The non-linear right-hand sides are given as
\begin{equation*}
	\begin{array}{rcl}
		                 F_v(v,p,\eta) & = & (
				 \partial_t \eta - \Delta' \eta)
				 \partial_n v - 2  (\nabla' \eta
				 \cdot \nabla') \partial_n v +
				 |\nabla' \eta|^2 \partial^2_n v \\[0.5em]
			                             &   & \quad -
							   (v \cdot
							  \nabla) v +
							   (v' \cdot
							  \nabla'\eta) \partial_n v
							  +
							  (\nabla'\eta,0)^\tau\partial_n
							  p,               \\[0.5em]
		                         G(v,\eta) & = & \nabla'\eta
					 \cdot \partial_n v',                                                                                                                        \\[0.5em]
		                     H_\eta(v,\eta) & = & -
				     \nabla'\eta \cdot \partial_n v'
				     - \nabla'\eta \cdot \nabla' v^n.                                          \end{array}
\end{equation*}

%%%%%%%%%%%%%%%%%%%%%%%%%%%%%%%%%%%%%%%%%%%%%%%%%%%%%%%%%%%%%%%%%%%%%%%%%%%%%
\section{The linearized system}
%%%%%%%%%%%%%%%%%%%%%%%%%%%%%%%%%%%%%%%%%%%%%%%%%%%%%%%%%%%%%%%%%%%%%%%%%%%%%
%%%%%%%%%%%%%%%%%%%%%%%%%%%%%%%%%%%%%%%%%%%%%%%%%%%%%%%%%%%%%%%%%%%%%%%%%%%%%

The aim of this section is to derive maximal regularity for the
 linearized system
\begin{equation}
	\label{linfsi}
	\begin{array}{rclll}
		                                \partial_t v -
					       \Delta v + \nabla
					       p& = & f_v
					       & \mbox{in} & \R_+
					       \times \R^n_+, \\
		                                               \div v &
							       = &
		g&  \mbox{in} & \R_+ \times \R^n_+, \\
		v'&=&0
		& \mbox{on} & \R_+ \times \R^{n-1},      \\
		\partial_t\eta - v^n & = & 0
		&  \mbox{on} & \R_+ \times \R^{n-1},      \\
		  -2 \partial_n v^n + p
		  -m(\partial_t,\partial')\eta& = & f_\eta
		  &  \mbox{on} & \R_+ \times \R^{n-1},      \\
		          v|_{t=0} &=& v_0  &\mbox{in} & \R^n_+,\\
			  \eta|_{t=0}&= & \eta_0
			  & \mbox{in} & \R^{n-1},\\
			  \partial_t\eta|_{t=0} & = & \eta_1
			  & \mbox{in} & \R^{n-1}.
	\end{array}
\end{equation}

We will consider this system in Sobolev spaces with exponential weight with respect to the time variable. Let $\rho\in\R$ and $X$ be a Banach space. For $u\in L^p(\R_+,X)$, we define $\Psi_\rho$ as the multiplication operator with $e^{-\rho t}$, i.e. $\Psi_\rho u(t) := e^{-\rho t}u(t),\; t\in\R_+$. The spaces with exponential weights are defined by
\begin{align*}
  H_{p,\rho}^s(\R_+,X) & := \Psi_{-\rho}(H_p^s(\R_+,X)),\\
   W_{p,\rho}^s(\R_+,X) & := \Psi_{-\rho}(W_p^s(\R_+,X))
\end{align*}
with canonical norms $\|u\|_{H_{p,\rho}^s(\R_+,X) } := \| \Psi_\rho u\|_{H_p^s(\R_+,X)}$ and
$\|u\|_{W_{p,\rho}^s(\R_+,X) } := \| \Psi_\rho u\|_{W_p^s(\R_+,X)}$. For $\rho\ge 0$ and $s>0$, we define ${}_0 H_{p,\rho}^s(\R_+,X)$ and ${}_0 W_{p,\rho}^s(\R_+,X)$ analogously, replacing $H_p^s$ and $W_p^s$ by ${}_0H_p^s$ and ${}_0W_p^s$, respectively. For mapping properties and interpolation results under the condition that $X$ is a UMD space, we refer, e.g., to \cite{Denk-Saal-Seiler08}, Lemma~2.2.
We also make use of homogeneous spaces, e.g., for $\Omega\subset\R^n$ we set
\[
	\dot H_p^1(\Omega):=\{v\in L^1_{\loc}(\Omega):\ \nabla v\in
	L^p(\Omega)\}
\]
and $\dot
H_{p,0}^1(\Omega):=\overline{C^\infty_c(\Omega)}^{\|\nabla\cdot\|_p}$.
The corresponding dual spaces are defined as
\[
	\dot H^{-1}_p(\Omega):=\bigl(\dot H^1_{p,0}(\Omega)\bigl)'
	\quad\text{and}\quad
	\dot H^{-1}_{p,0}(\Omega):=\bigl(\dot H^1_{p}(\Omega)\bigl)',
\]
see \cite{Pruess-Simonett16} Section~7.2. The homogeneous
Sobolev-Slobodeckii spaces over $\R^n$
are defined as usual \cite{Triebel15} and we have
\[
	\dot W^{s}_p(\R^n)=\dot B^s_{pp}(\R^n)
\]
for $1<p<\infty$, $n\in\N$, and $s\in\R\setminus\Z$,
where the latter one denotes the homogeneous Besov space.

In the following, we denote the time trace $u\mapsto \partial_t^k u|_{t=0}$ by $\gamma_k^t$ and the trace to the boundary $u\mapsto \partial_n^k u|_{\R^{n-1}}$ by $\gamma_k$.
The solution $(v,p,\eta)$ of \eqref{linfsi} will belong to the spaces
\begin{align*}
  v\in \mE_v & := H_{p,\rho}^1(\R_+;L^p(\R^n_+))\cap L^p_\rho(\R_+;H_p^2(\R^n_+)),\\
  p\in\mE_p & := L^p_\rho(\R_+; \dot H_p^1(\R^n_+)),\\
  \eta\in\mE_\eta & := W_{p,\rho}^{9/4 - 1/(4p)}(\R_+; L^p(\R^{n-1}))\cap H_{p,\rho}^2(\R_+; W_p^{1-1/p}(\R^{n-1}))\\
  & \quad \cap L^p_\rho(\R_+;W_p^{5-1/p}(\R^{n-1})).
\end{align*}
The function spaces for the right-hand side of \eqref{linfsi} are given by
\begin{align*}
  f_v \in \mF_v & := L^p_\rho(\R_+; L^p(\R^n_+)),\\
  g\in \mF_g & :=  H_{p,\rho}^1(\R_+;\dot H_p^{-1}(\R^n_+))\cap L^p_\rho(\R_+;H_p^1(\R^n_+)),\\
%  h \in \gamma_0\mE_v & := W_{p,\rho}^{1-1/(2p)}(\R_+; L^p(\R^{n-1}))\cap L^p_\rho(\R_+; W_p^{2-1/p}(\R^{n-1})),\\
 f_\eta\in \gamma_0\mE_p & := L^p_\rho(\R_+; \dot W_p^{1-1/p}(\R^{n-1})).
\end{align*}
By trace results with respect to the time trace, the spaces for the initial values are given by
\begin{align*}
  v_0 \in \gamma_0^t\mE_v &:= W_p^{2-2/p}(\R^n_+),\\
  \eta_0  \in \gamma_0^t\mE_\eta & := W_p^{5-3/p}(\R^{n-1}),\\
  \eta_1  \in \gamma_1^t\mE_\eta & := W_p^{3-3/p}(\R^{n-1}).
\end{align*}
We will also need the following compatibility conditions:
\begin{enumerate}[(C1)]
\item $ \div v_0 = g|_{t=0}$ in $\dot H_p^{-1}(\R^{n}_+)$.
\item If  $p>\tfrac32$, then $v_0'|_{\R^{n-1}} = 0$ almost everywhere in $\R^{n-1}$.
\item If  $p>\tfrac32$, then $v_0^n|_{\R^{n-1}} - \eta_1=0$ almost everywhere in $\R^{n-1}$.
\item There exists an $\eta_*\in\mE_\eta$ with $\eta_*|_{t=0} = \eta_0$, $\partial_t\eta_*|_{t=0}=\eta_1$ and
    \begin{equation}
      \label{eq2-6}
      (g,\partial_t \eta_*)\in H_{p,\rho}^1(\R_+; \dot H_{p,0}^{-1}(\R^n_+)).
    \end{equation} Here we define
 \[ (g,\partial_t \eta_*)(\phi) := \int_{\R^n_+} g\phi dx - \int_{\R^{n-1}} \partial_t \eta_*\phi dx'\]
       for $\phi\in\dot H_{p'}^1(\R^n_+)$.
       Additionally, we have $(g|_{t=0},\eta_1)=(g|_{t=0},
       v_0^n|_{\R^{n-1}})$ in $\dot H_{p,0}^{-1}(\R^n_+)$.
\end{enumerate}
We remark that only \eqref{eq2-6} is an additional condition, as it was shown in \cite{Denk-Saal-Seiler08}, Theorem~4.5,  that for every $\eta_0\in\gamma_0^t\mE_\eta$ and $\eta_1\in\gamma_1^t\mE_\eta$ there exists an $\eta_*\in \mE_\eta$ with $\eta_*|_{t=0} = \eta_0$ and $\partial_t\eta_*|_{t=0} = \eta_1$.

The main result of this section is the following maximal regularity result.

\begin{theorem}
  \label{2.2}
  Let $p>1$, $p\neq3/2$. Then there exists a $\rho_0>0$ such that for every $\rho\ge \rho_0$, system \eqref{linfsi} has a unique solution $(v,p,\eta)\in \mE_v\times\mE_p\times\mE_\eta$ if and only if the data $f_v,g,f_\eta,v_0,\eta_0,\eta_1$ belong to the spaces above and satisfy the compatibility conditions \upshape{(C1)--(C4)}. The solution depends continuously on the data.
\end{theorem}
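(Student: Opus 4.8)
The plan is to prove Theorem~\ref{2.2} by reducing the coupled fluid--plate system to a scalar problem for the interface function $\eta$ and then analyzing the resulting symbol via the Newton polygon method. The necessity of the compatibility conditions is elementary: if $(v,p,\eta)$ is a solution in $\mE_v\times\mE_p\times\mE_\eta$, then (C1)--(C3) follow by taking time- and space-traces in the equations and using the embedding $\mE_v\hookrightarrow \BUC([0,\infty);W^{2-2/p}_p(\R^n_+))$, which is legitimate for $p\neq 3/2$, while (C4) follows from the divergence equation tested against $\dot H^1_{p'}$-functions together with the fourth equation $\partial_t\eta = v^n$ on the boundary, which identifies $(g,\partial_t\eta)$ with a function in $H^1_{p,\rho}(\R_+;\dot H^{-1}_{p,0}(\R^n_+))$; one simply takes $\eta_*=\eta$. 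So the substance is sufficiency.

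For sufficiency, the first step is to reduce to zero initial data and homogeneous interior right-hand sides by standard extension arguments: solve an auxiliary Stokes problem (with the given $f_v$, $g$, $v_0$) on $\R_+\times\R^n_+$ using known maximal regularity for the Stokes operator with $H^\infty$-calculus, and subtract a suitable extension $\eta_*\in\mE_\eta$ of the interface initial data (which exists by \cite{Denk-Saal-Seiler08}, Theorem~4.5). The compatibility condition (C4) is exactly what guarantees that after these reductions the remaining data still lie in the correct spaces with vanishing traces; in particular \eqref{eq2-6} ensures the corrected divergence datum lies in ${}_0H^1_{p,\rho}(\R_+;\dot H^{-1}_{p,0})$. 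Thus one is left with solving \eqref{linfsi} with $f_v=0$, $g=0$, $v_0=0$, $\eta_0=\eta_1=0$ and only the boundary datum $f_\eta\in {}_0\gamma_0\mE_p$ (together with the coupling datum in the fourth equation, which after reduction is also zero) nonzero.

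The core step is then to solve this reduced homogeneous problem by Fourier--Laplace transform in $(t,x')\leftrightarrow(\lambda,\xi')$. For fixed $(\lambda,\xi')$ with $\Re\lambda\ge\rho_0$ one solves the resulting ODE system in $x_n$ for the Stokes resolvent on the half-line, expresses $\hat v^n|_{x_n=0}$ and $\partial_n\hat v^n|_{x_n=0}$ and the pressure trace $\hat p|_{x_n=0}$ as explicit multipliers applied to $\hat\eta$, insert these into the transformed fifth equation and into the coupling $\lambda\hat\eta=\hat v^n|_{x_n=0}$, and arrive at a scalar equation $\ell(\lambda,\xi')\hat\eta = \hat f_\eta$. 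The symbol $\ell$ contains the plate symbol $m(\lambda,\xi')=\lambda^2+\alpha|\xi'|^4+\beta|\xi'|^2+\gamma\lambda|\xi'|^2$ plus the Dirichlet-to-Neumann-type contribution of the Stokes system, which behaves like $|\xi'|$ times lower-order terms; the genuinely inhomogeneous, non-quasihomogeneous structure of $\ell$ (mixing orders $2$ in $\lambda$, $4$ in $\xi'$, and the $\sqrt{\lambda}$--type Stokes boundary symbol) is precisely why the Newton polygon approach is needed. One reads off the Newton polygon of $\ell$, checks the two-sided parabolicity / N-ellipticity condition (lower bound $|\ell(\lambda,\xi')|\gtrsim$ the associated Newton-polygon weight for $\Re\lambda\ge\rho_0$), which is where the structural damping $\gamma>0$ enters crucially, and then invokes the operator-valued Mikhlin/Newton-polygon multiplier theorem (as in \cite{Denk-Kaip13}, \cite{Denk-Saal-Seiler08}) to conclude that $\hat f_\eta\mapsto \hat\eta$ maps $\gamma_0\mE_p$ boundedly into $\mE_\eta$ — the three summands defining $\mE_\eta$ being exactly the corners/edges of the Newton polygon. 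Once $\eta$ is obtained, $v$ and $p$ are recovered from the Stokes problem with boundary data determined by $\eta$, and the maximal-regularity estimates for the Stokes operator give $(v,p)\in\mE_v\times\mE_p$ with the desired continuous dependence; uniqueness follows by running the same transform argument on the difference of two solutions and using injectivity of $\ell(\lambda,\xi')$.

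The main obstacle will be verifying the lower bound for the scalar symbol $\ell(\lambda,\xi')$ uniformly for $\Re\lambda\ge\rho_0$ and identifying its Newton polygon correctly: one must control the Stokes Dirichlet-to-Neumann contribution (which involves the branch $\sqrt{\lambda+|\xi'|^2}$ and rational combinations thereof) against the plate symbol, show no cancellation occurs in the regimes $|\xi'|\ll\sqrt{|\lambda|}$ and $|\xi'|\gg\sqrt{|\lambda|}$, and check that the resulting anisotropic multiplier estimates reproduce precisely the three Sobolev scales in $\mE_\eta$ — in particular the unusual exponent $9/4-1/(4p)$ in the time-regularity of $\eta$, which must emerge from a non-principal vertex of the Newton polygon. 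Handling the mixed time/boundary traces and the compatibility bookkeeping for $p>3/2$ versus $p<3/2$ is technically delicate but routine given the trace theory cited.
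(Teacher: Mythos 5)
Your proposal follows essentially the same route as the paper: necessity via trace results, reduction to a homogeneous problem through an auxiliary half-space Stokes system and an extension $\eta_*$ of the interface data, Fourier--Laplace reduction to a scalar symbol for $\hat\eta$, and maximal regularity for that symbol via the Newton polygon method. One simplification you may have overlooked: the divergence-free condition together with $v'=0$ on $\partial\R^n_+$ forces $\partial_n v^n|_{\partial\R^n_+}=-\nabla'\cdot v'|_{\partial\R^n_+}=0$, so the term $-2\partial_n v^n$ drops out of the stress boundary condition and the resulting scalar symbol is exactly $N_L(\lambda,|\xi'|)/|\xi'|^2=m(\lambda,\xi')+\lambda\omega^2(\omega+|\xi'|)/|\xi'|^2$ with $\omega=\sqrt{\lambda+|\xi'|^2}$.
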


The proof of this theorem will be done in several steps and follows from Subsections~\ref{sec_necessity}--\ref{sec_uniqueness}.

\subsection{Necessity}
\label{sec_necessity}
Let $(v,p,\eta)\in\mE_v\times \mE_p\times \mE_\eta$ be a solution of \eqref{linfsi}. By standard continuity and  trace results, the right-hand sides  $f_v,$ and $g$ as well as the time trace $v_0$ belong to the spaces above. Noting that $\div\colon L^p(\R^n_+)\to \dot H_p^{-1}(\R^n_+)$ is continuous, we have $g = \div u \in H_p^1(\R_+;\dot H_p^{-1}(\R^n_+)) \subset C([0,\infty);\dot H_p^{-1}(\R^n_+))$, and as for all $p>1$ we also have $v_0\in W_p^{2-2/p}(\R^n_+)\subset L^p(\R^n_+)$, we obtain the compatibility condition (C1) for all $p>1$ (see also \cite{Pruess-Simonett16}, Theorem~7.2.1).

%For the space of $h^n$, we remark that
%and therefore $h^n\in\gamma_0\mE_v$.
For $f_\eta$ we have $\mE_\eta\subset H_{p,\rho}^2(\R_+;W_p^{1-1/p}(\R^{n-1})$ by the mixed derivative theorem (see, e.g., \cite{Denk-Saal-Seiler08}, Lemma~4.3), and therefore
\[\partial_t^2\eta \in L^p_\rho(\R_+; W_p^{1-1/p}(\R^{n-1})\subset \gamma_0\mE_p.\]
It is easy to see that the other terms of $m(\partial_t,\partial')\eta$ belong to the same space.
By standard trace results, we also obtain $\gamma_0 u\in \gamma_0\mE_p$. Concerning the pressure, we remark that $\gamma_0\colon \dot H_p^1(\R^n_+) \to \dot W_p^{1-1/p}(\R^{n-1})$ is a retraction, see, e.g., \cite{Jawerth78}, Theorem 2.1, and therefore $\gamma_0 p\in \gamma_0\mE_p$. This yields $f_\eta\in \gamma_0\mE_p$. For the time traces of $\eta$, we can apply \cite{Denk-Saal-Seiler08}, Theorem~4.5 which gives $\eta_0\in\gamma_0^t\mE_\eta$ and $\eta_1\in \gamma_1^t\mE_\eta$.

If $p>\frac32$, then the boundary trace of $v_0$  exists in the space
$W_p^{2-3/p}(\R^{n-1})$. This yields the compatibility conditions (C2)
and (C3) as equality in the space $W_p^{2-3/p}(\R^{n-1})$, hence in
particular as equality almost everywhere.

To show (C4), we can set $\eta_* := \eta$.
 For $\phi\in \dot H_{p'}^1(\R^n_+)$, we obtain
\[ (g,\partial_t\eta)(\phi) = \int_{\R^n_+}  \div u \,\phi dx  - \int_{\R^{n-1}} u^n \phi dx' = -\int_{\R^n_+} u \cdot \nabla \phi dx \]
and therefore $(g, \partial_t\eta)\in H_{p,\rho}^1(\R_+; \dot H_{p,0}^{-1}(\R^n_+))$.
Setting $t=0$, we obtain $(g|_{t=0}, \eta_1) = (g|_{t=0}, v_0^n)$ as
equality in $\dot H_{p,0}^{-1}(\R^n_+)$.

\subsection{Reductions}
\label{sec_reduction}

We can reduce some part of the right-hand side of \eqref{linfsi} to zero by applying known results on the Stokes system. For this, let $(v^{(1)}, p^{(1)})\in \mE_v\times \mE_p$ be the unique solution of the Stokes problem in the half space
\begin{equation} \label{eq2-3}
	\begin{array}{rclll}
		                                \partial_t v^{(1)} -
					       \Delta v + \nabla
					       p^{(1)}& = & f_v
					       & \mbox{in} & \R_+
					       \times \R^n_+, \\
		                                               \div v^{(1)} &
							       = &
		g&  \mbox{in} & \R_+ \times \R^n_+, \\
		(v^{(1)})'&=&0
		& \mbox{on} & \R_+ \times \R^{n-1},      \\
		 (v^{(1)})^n & = & \partial_t\eta_*
		&  \mbox{on} & \R_+ \times \R^{n-1},      \\
		          v^{(1)}|_{t=0} &=& v_0  &\mbox{in} & \R^n_+.
	\end{array}
\end{equation}
The unique solvability of \eqref{eq2-3} follows from \cite{Pruess-Simonett16}, Theorem~7.2.1.
To show that this theorem can be applied, we remark in particular that the compatibility condition (e) in \cite[p.~324]{Pruess-Simonett16} holds because of (C4).

Let $\tilde v := v-v^{(1)} $,  $\tilde p := p - p^{(1)}$, and $\tilde\eta := \eta - \eta_*$. Then $(v,p,\eta)$ is a solution of \eqref{linfsi} if and only if $(\tilde v,\tilde p,\tilde \eta)$ is a solution of
\begin{equation}\label{eq2-4}
	\begin{array}{rclll}
		                                \partial_t \tilde v -
					       \Delta \tilde v + \nabla
					       \tilde p& = & 0
					       & \mbox{in} & \R_+
					       \times \R^n_+, \\
		                                               \div \tilde v &
							       = &
		0&  \mbox{in} & \R_+ \times \R^n_+, \\
		\tilde v'&=&0
		& \mbox{on} & \R_+ \times \R^{n-1},      \\
		 \tilde v^n -  \partial_t\tilde \eta& = &  0
		&  \mbox{on} & \R_+ \times \R^{n-1},      \\
		  -2 \partial_n \tilde v^n + \tilde p
		  -m(\partial_t,\partial')\tilde \eta& = & \tilde f_\eta
		  &  \mbox{on} & \R_+ \times \R^{n-1},      \\
		          \tilde v|_{t=0} &=& 0  &\mbox{in} & \R^n_+,\\
			  \tilde\eta|_{t=0}&= & 0
			  & \mbox{in} & \R^{n-1},\\
			  \partial_t\tilde\eta|_{t=0} & = & 0
			  & \mbox{in} & \R^{n-1}.
	\end{array}
\end{equation}
Here,
\begin{align*}
  \tilde f_\eta & := f_\eta + 2\partial_n (v^{(1)})^n
   - p^{(1)}  +  m(\partial_t, \partial') \eta_*.
\end{align*}
By the trace results in Subsection~\ref{sec_necessity},
we have $\tilde f_\eta \in \gamma_0\mE_p$.

\subsection{Solution operators for the reduced linearized problem}
In the following, we show solvability for the reduced problem \eqref{eq2-4}, omitting the tilde again. An application of the Laplace transform formally leads to the resolvent problem
\begin{equation}
	\label{resfsi}
	\begin{array}{r@{\ =\ }ll}
		\lambda v-\Delta v+\nabla p &   0&\text{in} \ \R^n_+,\\
	\mbox{div}\,v &   0&\text{in} \ \R^n_+,\\
		v'&0&\text{on} \ \partial\R^n_+,\\
		v^n- \lambda\eta&0&\text{on} \ \partial\R^n_+,\\
	-2\partial_n v^n+p-m(\lambda,\partial')\eta&f_\eta&\text{on} \
	\partial\R^n_+
	\end{array}
\end{equation}
with
\[
	m(\lambda,\partial')\eta=
	\lambda^2\eta+\alpha(\Delta')^2\eta-\beta\Delta'\eta
	-\gamma\lambda\Delta'\eta.
\]
We observe that the second and the third line of \eqref{resfsi} imply that
\[
	\partial_n v^n(\cdot,0)=-\nabla'\cdot v'(\cdot,0)=0.
\]
Hence the fifth line reduces to
\[
p-m(\lambda,\partial')\eta=f_\eta\quad\text{on}\ \partial\R^n_+.	
\]

Applying partial Fourier transform in
$x'\in\R^{n-1}$,
we obtain the following system of ordinary differential equations in
$x_n$ for the
transformed functions  $\hat v$, $\hat p$ and $\hat \eta$:
\begin{align*}
\omega^2 \hat v - \partial_n^2 \hat v + (i \xi',\partial_n)^\tau \hat p & =
0,
\quad x_n>0, \nonumber \\%\label{(2.2)}
i \xi \cdot \hat v' + \partial_n \hat v^n & = 0, \quad x_n>0, \\%\label{(2.3)} \\
\hv'&=0,\quad x_n=0,\\
\lambda \heta -\hat v^n & = 0, \quad x_n=0,\\
\hp-m(\lambda,|\xi'|)\heta&=\hf_\eta,\quad x_n=0,\\
\end{align*}
Here we have set $\omega:= \omega(\lambda,\xi'):=\sqrt{\lambda +
|\xi'|^2}$ and
\[
	m(\lambda,\xi'):=\lambda^2+\alpha|\xi'|^4+\gamma\lambda|\xi'|^2
	+\beta|\xi'|^2.
\]

Multiplying the first equation  with  $(i\xi',\partial_n)$ and
combing it with the second one yields
$(-|\xi'|^2+\partial_n^2)\hp = 0$ for $x_n>0$.
The only stable solution of this equation is given by
\begin{equation}\label{(2.7)}
  \hp(\xi',x_n) = \hp_0(\xi') e^{-|\xi'|x_n},
  \quad \xi' \in {\mathbb R}^{n-1}, \ x_n > 0.
\end{equation}
To solve the above system we employ the ansatz
\begin{align}
  \hat v'(\xi',x_n) & = -\int_0^\infty k_+(\lambda,\xi',x_n,s)
  i\xi'\hp(\xi',s)  ds + \hat \phi'(\xi') e^{-\omega x_n},\label{rd_eq1}\\
  \hat v^n(\xi',x_n) & = -\int_0^\infty k_-(\lambda,\xi',x_n,s)
  \partial_n \hp(\xi',s)ds + \hat \phi^n(\xi') e^{- \omega x_n}\label{rd_eq1a}
\end{align}
with
$$
  k_{\pm} (\lambda,\xi,x_n,s) := \frac{1}{2 \omega} \left( e^{-
  \omega|x_n-s|} \pm e^{-\omega (x_n+s)} \right).
$$
Here the traces $\hat p_0$ and $\hat\phi = (\hat \phi',
\hat \phi^n)^\tau$ still have to be determined.
The fact that $\div v=0$ enforces
\begin{equation}\label{divbed}
	i\xi'\cdot\hphi'(\xi')=\omega\hphi^n(\xi').
\end{equation}
The kinematic boundary condition instantly gives us
\begin{equation}\label{kinbc}
	\lambda\heta-\hphi^n=0.
\end{equation}
Next, by utilizing (\ref{(2.7)}), from the tangential boundary condition
we obtain
\[
	0=\hv'(\xi',0)=-\int_0^\infty \frac{e^{-\omega s}}{\omega}
  i\xi'\hp(\xi',s) ds + \hat \phi'(\xi'),
\]
which implies
\begin{equation}
\label{eq2-9}
	\frac{i\xi'}{\omega+|\xi'|}\hp_0=\omega\hphi'.
\end{equation}
Multiplying this with $i\xi'$ and employing the relations
(\ref{divbed}), (\ref{kinbc}) yields
\begin{equation}\label{tangbc}
	-\frac{|\xi'|^2}{\omega+|\xi'|}\hp_0
	=\omega^2\hphi^n=\lambda\omega^2 \heta.
\end{equation}
Plugging this into the last line of the transformed system we obtain
\begin{equation}\label{normaltrace}
	\left(\frac{\lambda\omega^2(\omega+|\xi'|)}{|\xi'|^2}
	+m(\lambda,\xi')\right)\heta=-\hf_\eta.
\end{equation}
This yields
\begin{equation}\label{eq2-8}
	\heta=-\frac{|\xi'|^2}{N_L(\lambda,|\xi'|)}\hf_\eta
\end{equation}
with
\[
	N_L(\lambda,|\xi'|)=|\xi'|^2m(\lambda,\xi')
	+\lambda\omega^2(\omega+|\xi'|).
\]

Formula \eqref{eq2-8} defines the solution operator for $\eta$ as a
function of $f_\eta$ on the level of its Fourier-Laplace transform. The
following result is based on the Newton polygon approach and shows that
the solution operator is continuous on the related Sobolev spaces. In
the following, we consider $(-\Delta')^{1/2}$ as an unbounded operator
in $L^p_\rho(\R_+; L^p(\R^{n-1})$, and define
$N_L(\partial_t,(-\Delta')^{1/2})$ by the joint $H^\infty$-calculus of
$\partial_t$ and $(-\Delta')^{1/2}$ (for details, we refer to, e.g.,
\cite{Denk-Saal-Seiler08}, Corollary~2.9). We will apply the Newton
polygon approach on the Bessel potential scale $H_p^s$ with respect to
time and on the Besov scale $B_{pp}^r$ with respect to space.

\begin{lemma}
  \label{2.3}
  a) There exists a $\rho_0>0$ such that for all $\rho\ge \rho_0$, the operator
  $ N_L(\partial_t, (-\Delta')^{1/2})\colon H_N \to L^p_\rho(\R_+;B_{pp}^{-1-1/p}(\R^{n-1}) ) $ is an isomorphism, where
  \begin{align*}
   H_N &  := {}_0 H_{p,\rho}^{5/2}(\R_+;B_{pp}^{-1-1/p}(\R^{n-1}))\cap {}_0H_{p,\rho}^2(\R_+;B_{pp}^{1-1/p}(\R^{n-1})) \\
  & \quad \cap L^p_\rho(\R_+; B_{pp}^{5-1/p}(\R^{n-1})).
  \end{align*}

  b) Let $\rho\ge \rho_0$. Then for every $f_\eta\in \gamma_0\mE_p$, we have
  \begin{align*}
    \eta & := \Delta' \big[ N_L(\partial_t, (-\Delta')^{1/2})\big]^{-1}  f_\eta \in \mE_\eta,\\
    \phi^n  & := \partial_t \eta \in \gamma_0\mE_v,\\
    p_0 & := f_\eta +  m(\partial_t,\partial')\eta \in \gamma_0\mE_p.
  \end{align*}
\end{lemma}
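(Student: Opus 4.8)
The plan is to deduce a) from the abstract Newton--polygon isomorphism theorem (\cite{Denk-Saal-Seiler08,Denk-Kaip13}), the substantial point being an $\mathcal{N}$-ellipticity estimate for the scalar symbol $N_L$, and to obtain b) from that same estimate by a multiplier argument. First I would rewrite $N_L$: with $r:=|\xi'|$, $\zeta:=\lambda/r^2$ and $\omega^2=\lambda+r^2=r^2(\zeta+1)$,
\[
 N_L(\lambda,r)=r^2 m(\lambda,\xi')+\lambda\omega^2(\omega+r)
 =r^6\bigl(\zeta^2+\gamma\zeta+\alpha\bigr)+\beta r^4
 +r^5\bigl(\zeta(\zeta+1)^{3/2}+\zeta^2+\zeta\bigr),
\]
i.e.\ in the monomials $\lambda,r$, $N_L=\lambda^2r^2+\gamma\lambda r^4+\alpha r^6+\beta r^4+\lambda^2r+\lambda r^3+\lambda(\lambda+r^2)^{3/2}$. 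Reading orders in $\partial_t$ and $(-\Delta')^{1/2}$, the Newton polygon of $N_L$ is the triangle $\mathcal{N}:=\mathrm{conv}\{(5/2,0),(2,2),(0,6)\}$, with $(1,4)$ on the edge $\{2a+b=6\}$, the only \emph{fractional} generalized monomial being $\lambda(\lambda+r^2)^{3/2}$ (Newton polygon the segment $[(5/2,0),(1,3)]$), and all remaining points in the interior. The associated weight is $\Lambda_N(\lambda,r):=|\lambda|^{5/2}+|\lambda|^2r^2+r^6$, and, taking $B^{-1-1/p}_{pp}$ as base space, the three vertices $(5/2,0),(2,2),(0,6)$ correspond to the three factors ${}_0H^{5/2}_{p,\rho}(\R_+;B^{-1-1/p}_{pp})$, ${}_0H^2_{p,\rho}(\R_+;B^{1-1/p}_{pp})$, $L^p_\rho(\R_+;B^{5-1/p}_{pp})$ of $H_N$ --- this is exactly why the Newton-polygon method produces this non-standard intersection.

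For a), the heart is $\mathcal{N}$-ellipticity: there are $\varepsilon,\rho_0,c>0$ with $N_L(\lambda,z)\neq0$ and $|N_L(\lambda,z)|\ge c\,\Lambda_N(\lambda,|z|)$ for $\Re\lambda\ge\rho_0$, $|\arg z|\le\varepsilon$. I would verify this through the reduced symbols along the edges and at the vertices of $\mathcal{N}$ (for real $z=r>0$; the small sector in $z$ is a routine perturbation). On the edge $\{2a+b=6\}$, rescaling $\lambda=r^2\zeta$ and dividing by $r^6$ yields $\zeta^2+\gamma\zeta+\alpha$ (the leading part of $r^{-4}m(\lambda,\xi')$); its roots $\tfrac12(-\gamma\pm\sqrt{\gamma^2-4\alpha})$ have negative real part because $\alpha,\gamma>0$, so it does not vanish on $\{\Re\zeta\ge0\}$ --- this is the only place where the structural-damping hypothesis $\gamma>0$ is genuinely used (cf.\ the parabolicity remark after \eqref{phigamma}). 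On the edge $\{4a+b=10\}$, rescaling $\lambda=r^4\xi$ and dividing by $r^{10}$ yields $\xi^{5/2}+\xi^2=\xi^2(\xi^{1/2}+1)$, non-vanishing for $\Re\xi\ge0$, $\xi\neq0$, since $\Re\xi\ge0$ forces $\Re\xi^{1/2}\ge0$. The vertex reduced symbols are $\alpha$ at $(0,6)$ and $\lambda^{5/2}$ at $(5/2,0)$, both nonzero, and $(2,2)$ is covered by the two adjacent edges. The remaining monomials $\beta r^4,\lambda^2r,\lambda r^3$ and the non-principal part of $\lambda(\lambda+r^2)^{3/2}$ have Newton polygon strictly inside $\mathcal{N}$ and are absorbed by choosing $\rho_0$ large (which also settles the bounded $\lambda$-range). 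With $\mathcal{N}$-ellipticity established, \cite{Denk-Saal-Seiler08} (resp.\ \cite{Denk-Kaip13}) gives that $N_L(\partial_t,(-\Delta')^{1/2})$ is an isomorphism from the anisotropic Newton-polygon space of $\mathcal{N}$ onto $L^p_\rho(\R_+;B^{-1-1/p}_{pp})$; by the identification above that space is precisely $H_N$, and the ${}_0$-subscripts are automatic on $\R_+$. This proves a).

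For b), by \eqref{eq2-8} the solution operator for $\eta$ is, via the joint functional calculus, the multiplier with symbol $-r^2/N_L$, so that $\eta=\Delta'[N_L(\partial_t,(-\Delta')^{1/2})]^{-1}f_\eta$ with $f_\eta\in\gamma_0\mE_p=L^p_\rho(\R_+;\dot W^{1-1/p}_p)=L^p_\rho(\R_+;\dot B^{1-1/p}_{pp})$. Using $|N_L|\gtrsim\Lambda_N$ from a), one shows that $-r^2/N_L$ realizes each of $L^p_\rho(\R_+;\dot B^{1-1/p}_{pp})\to W^{9/4-1/(4p)}_{p,\rho}(\R_+;L^p),\ H^2_{p,\rho}(\R_+;W^{1-1/p}_p),\ L^p_\rho(\R_+;W^{5-1/p}_p)$; at the symbol level these reduce to $|\lambda|^{9/4-1/(4p)}r^{1+1/p}\lesssim\Lambda_N$, $|\lambda|^2r^2\lesssim\Lambda_N$, $r^6\lesssim\Lambda_N$, i.e.\ to the inclusion of $(9/4-1/(4p),1+1/p)$, $(2,2)$, $(0,6)$ in the \emph{shadow} $\{a,b\ge0,\ 4a+b\le10,\ 2a+b\le6\}$ of $\mathcal{N}$; the first point lies on the edge $\{4a+b=10\}$, which is exactly why a Slobodeckii (Besov) space appears in $\mE_\eta$, and the other two are vertices. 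Hence $\eta\in\mE_\eta$. For $\phi^n=\partial_t\eta$: $\gamma_0\mE_v=W^{1-1/(2p)}_{p,\rho}(\R_+;L^p)\cap L^p_\rho(\R_+;W^{2-1/p}_p)$ is the standard spatial trace space of $\mE_v$, and $\partial_t\mE_\eta\hookrightarrow W^{5/4-1/(4p)}_{p,\rho}(\R_+;L^p)\cap L^p_\rho(\R_+;W^{3-1/p}_p)\hookrightarrow\gamma_0\mE_v$, using the \mdt\ for $\mE_\eta\hookrightarrow H^1_{p,\rho}(\R_+;W^{3-1/p}_p)$ and then the elementary embeddings $W^{5/4-1/(4p)}_{p,\rho}(\R_+;L^p)\hookrightarrow W^{1-1/(2p)}_{p,\rho}(\R_+;L^p)$ and $W^{3-1/p}_p\hookrightarrow W^{2-1/p}_p$ (alternatively, run the multiplier argument for $-\lambda r^2/N_L$, the symbol bounds $|\lambda|^{2-1/(2p)}r^{1+1/p}\lesssim\Lambda_N$, $|\lambda|r^3\lesssim\Lambda_N$ again holding). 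Finally $p_0=f_\eta+m(\partial_t,\partial')\eta\in\gamma_0\mE_p$, since $f_\eta\in\gamma_0\mE_p$ by hypothesis and each term of $m(\partial_t,\partial')\eta$ lies in $L^p_\rho(\R_+;W^{1-1/p}_p)\hookrightarrow L^p_\rho(\R_+;\dot W^{1-1/p}_p)$: $\partial_t^2\eta$ from the second factor of $\mE_\eta$, $(\Delta')^2\eta$ and $\Delta'\eta$ from the third, and $\partial_t\Delta'\eta$ from $\eta\in H^1_{p,\rho}(\R_+;W^{3-1/p}_p)$ (a consequence of the second and third factors by the \mdt).

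The main obstacle is the $\mathcal{N}$-ellipticity estimate in a): it is where $\gamma>0$ (together with $\alpha>0$) is genuinely needed --- through the absence of zeros of $\zeta^2+\gamma\zeta+\alpha$ in the closed right half-plane --- and it must be carried out uniformly across the three frequency regimes $r^2\ll|\lambda|$, $r^2\sim|\lambda|$, $r^2\gg|\lambda|$ with careful control of the lower-order terms and the compact parameter range. Everything afterwards --- identifying the Newton-polygon space with $H_N$, and the multiplier estimates in b) --- is bookkeeping, up to the harmless low-frequency mismatch between the homogeneous and inhomogeneous smoothness scales (where $N_L(\lambda,0)=\lambda^{5/2}$ is invertible).
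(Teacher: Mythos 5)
Your proposal takes essentially the same route as the paper. In a) you verify $\mathcal{N}$-ellipticity by examining the edges and vertices of the Newton polygon directly, whereas the paper enumerates the $r$-principal symbols $P_r(\lambda,z)$ for all $r>0$; these are equivalent formulations, and both isolate the reduced symbol $\zeta^2+\gamma\zeta+\alpha$ along the edge $2a+b=6$ (resp. $r=2$) as the only place where the structural-damping hypothesis $\gamma>0$ (with $\alpha>0$) is needed. In b) the paper goes through the abstract mixed derivative theorem in mixed scales (Denk--Kaip, Prop.~2.7.6) applied to $H_N$, while you unpack the same step into explicit symbol bounds $|\lambda|^ar^b\lesssim\Lambda_N$ and check that the relevant points lie in the shadow of $\mathcal{N}$ (correctly noting that $(9/4-1/(4p),\,1+1/p)$ sits on the edge $4a+b=10$, which is precisely why a Slobodeckii/Besov time regularity, not a Bessel-potential one, appears in $\mE_\eta$). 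The treatment of $\phi^n$ and $p_0$ coincides with the paper's.
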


\begin{proof}
  a) We apply the Newton-polygon approach developed in
\cite{Denk-Saal-Seiler08}. Replacing $z=|\xi'|$,
the $r$-principle symbols, i.e., the leading terms
of $N_L$ associated to the
relation $\lambda\sim z^r$, are easily calculated as
\[
	P_r(\lambda,z)=
	\left\{
	\begin{array}{rl}
	\alpha z^6,& 0<r<2,\\
	m_0(\lambda,z)z^2,& r=2,\\
	\lambda^2 z^2,&2<r<4,\\
	\lambda^2 z^2+\lambda^{5/2},&r=4,\\
	\lambda^{5/2},& r>4,
	\end{array}
	\right.
\]
where $m_0=m$ for $\beta=0$, that is
\[
	m_0(\lambda,z):=\lambda^2+\alpha z^4+\gamma\lambda z^2.
\]
In other words, the associated Newton-polygon has the three relevant vertices $(6,0)$, $(2,2)$, and $(0,\frac52)$ and two relevant edges which again reflects
the quasi-homogeneity of $N_L$.

Now, let $\vp\in(0,\pi/2)$ and $\theta\in(0,\vp/4)$. For $r\neq 2$
we then obviously have
\begin{equation}\label{rsymbcond}
	P_r(\lambda,z)\neq 0\quad
	\left((\lambda,z)\in\Sigma_{\pi-\vp}\times\Sigma_\theta\right).
\end{equation}
For $r=2$ we deduce
\[
	P_2(\lambda,z)=0
	\quad\Leftrightarrow \quad \lambda=\frac{z^2}{2}
	\left(-\gamma\mp \sqrt{\gamma^2-4\alpha}\right).
\]
By the fact that $\gamma>0$ we see that
\[
	\vp_0:=\pi-\arg \left(-\gamma\mp \sqrt{\gamma^2-4\alpha}\right)
	<\frac{\pi}2.
\]
Thus, assuming $\vp\in(\vp_0,\pi/2)$ and
$\theta\in\left(0,(\vp-\vp_0)/4\right)$ we see that (\ref{rsymbcond})
is satisfied for all $r>0$.
This allows for the application of \cite[Theorem~3.3]{Denk-Saal-Seiler08} (setting $s=0$ and $r=-1-1/p$ in the notation of \cite{Denk-Saal-Seiler08}) which yields a).

b) We write
\[ \eta =   [ N_L(\partial_t, (-\Delta')^{1/2})\big]^{-1}\Delta'   f_\eta.\]
As $\Delta'$ is an isomorphism from $\dot H_p^{2+t}(\R^{n-1})$ to $\dot H_p^{t}(\R^{n-1})$ for each  $t\in\R$,
by real interpolation of these spaces  (see \cite{Jawerth78}, Lemma 1.1) we see that it is also an isomorphism
from $\dot B_{pp}^t(\R^{n-1})$ to $\dot B_{pp}^{t-2}(\R^{n-1})$ for each $t\in\R$. In particular, $\Delta' f_\eta \in L^p_\rho(\R_+; \dot B_{pp}^{-1-1/p}(\R^{n-1}))$. Using the fact that for $s<0$ the embedding $\dot B_{pp}^s (\R^{n-1}) \subset B_{pp}^s( \R^{n-1})$ holds (see \cite[p. 104, (3.339)]{Triebel14}, \cite[Section~3.1]{Triebel15}), we obtain the embedding
\[ L^p_\rho(\R_+; \dot B_{pp}^{-1-1/p}(\R^{n-1})) \subset L^p_\rho(\R_+;  B_{pp}^{-1-1/p}(\R^{n-1})).\]
An application of a) yields
\begin{align*}
 \eta & \in {}_0H_{p,\rho}^{5/2}(\R_+;B_{pp}^{-1-1/p}(\R^{n-1}))  \cap {}_0H_{p,\rho}^{2}(\R_+;B_{pp}^{1-1/p}(\R^{n-1}))\\
 & \quad \cap  L^p_\rho(\R_+; B_{pp}^{5-1/p}(\R^{n-1})).
 \end{align*}
Now, the mixed derivative theorem in mixed scales (see
\cite{Denk-Kaip13}, Proposition~2.7.6) implies
\begin{align*}
 {}_0H_{p,\rho}^{5/2}(\R_+;&B_{pp}^{-1-1/p}(\R^{n-1}))  \cap L^p_\rho(\R_+; B_{pp}^{5-1/p}(\R^{n-1}))\\
 &  \subset B_{pp,\rho}^{9/4 - 1/(4p)}(\R_+; L^p(\R^{n-1}))
\end{align*}
and we obtain $\eta\in \mE_\eta$.

For $u^n := \partial_t \eta$, we immediately get
\[ u^n  \in W_{p,\rho}^{5/4-1/(4p)}(\R_+; L^p(\R^{n-1}))\cap L^p_\rho(\R_+;W_p^{3-1/p}(\R^{n-1})\subset\gamma_0\mE_v.\]
Finally, the fact that $m(\partial_t, \partial')\eta \in \gamma_0\mE_p$ for $\eta\in \mE_\eta$ was already remarked in Subsection~\ref{sec_necessity}.
\end{proof}

Due to the last result, we obtain the existence of a solution $(v,p,\eta)$ of \eqref{resfsi}. For  $\eta$, $\phi^n$, and $p_0$ defined as in Lemma~\ref{2.3} b), we can define $p$ and $v$ by \eqref{(2.7)} and \eqref{rd_eq1}--\eqref{rd_eq1a}, respectively. Here, $\phi'$ is given by \eqref{eq2-9}. As we know that $\phi^n$ and $p_0$ belong to the canonical spaces by Lemma~\ref{2.3} b), we get $v\in \mE_v$ and $p\in\mE_p$ by standard results on the Stokes equation (see, e.g., \cite{Hieber-Saal18}, Section~2.6, and \cite{Pruess-Simonett16}, Section~7.2). By construction, $(v,p,\eta)$ is a solution of \eqref{resfsi}.

\subsection{Uniqueness of the solution}
\label{sec_uniqueness}
To show that the solution of \eqref{linfsi} is unique, let $(v,p,\eta)$ be a solution with zero right-hand side and zero initial data. Then the Laplace transform in $t$ and partial Fourier transform in $x'$ are well-defined, and the calculations above show, in particular, that
\[ \heta=-\frac{|\xi'|^2}{N_L(\lambda,|\xi'|)}\hf_\eta = 0 \]
for almost all $\xi'\in\R^{n-1}$. Therefore, $\eta=0$ which implies that
$(v,p)$ is the solution of the Dirichlet Stokes system with zero data.
Therefore, $v=0$ and $p=0$.

\medskip

This finishes the proof of Theorem~\ref{2.2}.

\begin{remark}
  \label{remark-finite-time}
  Theorem~\ref{2.2} was formulated on the infinite time interval $(0,\infty)$ with exponentially weighted spaces with respect to $t$. As usual in the theory of maximal regularity, we obtain the same results on finite time intervals $t\in J = (0,T)$ with $T<\infty$ without weights, i.e., with $\rho=0$. This is due to the fact that on finite time intervals the weighted and unweighted norms are equivalent and that there exists an extension operator from $(0,T)$ to $(0,\infty)$ acting on all spaces above.

  Therefore, the results of Theorem~\ref{2.2} hold  with $\rho=0$ on the finite interval $J=(0,T)$. As we consider the nonlinear equation on a finite time interval, we will replace the function spaces above by $\mE_v := H^1(J; L^p(\R^n_+))\cap L^p(J;H_p^2(\R^n_+))$ etc., keeping the same notation.
\end{remark}

%%%%%%%%%%%%%%%%%%%%%%%%%%%%%%%%%%%%%%%%%%%%%%%%%%%%%%%%%%%%%%%%%%%%%%%%%%%%%
\section{The nonlinear system}
%%%%%%%%%%%%%%%%%%%%%%%%%%%%%%%%%%%%%%%%%%%%%%%%%%%%%%%%%%%%%%%%%%%%%%%%%%%%%
%%%%%%%%%%%%%%%%%%%%%%%%%%%%%%%%%%%%%%%%%%%%%%%%%%%%%%%%%%%%%%%%%%%%%%%%%%%%%

To prove mapping properties of the nonlinearities we employ sharp
estimates for anisotropic function spaces
provided in \cite{koehnesaal}. In fact, we can proceed
very similar as in \cite[Section~5.2, Proposition~5.6]{koehnesaal}.
For $\omega_j\in\N_0$, $j=1,\ldots,\nu$, we define a weight
vector as $\omega:=(\omega_1,\ldots,\omega_\nu)$ and denote by
$\dot\omega:=\mathrm{lcm}\{\omega_1,\ldots,\omega_\nu\}$ the
lowest common multiple. Further, for $n=(n_1,\ldots,n_\nu)\in\N^\nu$
we write
\[
	\R^n=\R^{n_1}\times\cdots\times\R^{n_\nu}.
\]
The (generalized) Sobolev index of an $E$-valued
anisotropic function space then reads as
\[
        \frac{1}{\dot{\omega}} \left( s - \frac{\omega \cdot n}{p} \right)
	=: \left\{
        \begin{array}{rc}
                \mbox{ind}(B^{s, \omega}_{p, q}(\bR^n,\,E)), &
		s \in\R,\ 1 < p < \infty,\ 1 \leq q \leq \infty, \\[0.5em]
                \mbox{ind}(H^{s, \omega}_p(\bR^n,\,E)),      &  -\infty
< s < \infty,\ 1 < p < \infty,                        \\[0.5em]
                \mbox{ind}(W^{s, \omega}_p(\bR^n,\,E)),      &  0 \leq s
 < \infty,\ 1 \leq p < \infty,
        \end{array}
        \right.
\]
where $\omega\cdot n=\sum_{j=1}^\nu \omega_jn_j$.
Note that we have the corresponding definition, if
$\R^n$ is replaced by a cartesian product of Intervals.
For an introduction
to anisotropic spaces such as $H^{s, \omega}_{p}(\R^n,E)$
we refer to \cite{koehnesaal} and the references cited therein.
In the situation considered here
we always have $\omega=(2,1)$ and the anisotropic
spaces below can be represented as an intersection such as
\begin{align*}
	&H^{1,(2,1)}_p(J\times \R^{n-1},L^p(\R_+))\\
	&=H^{1/2}_p(J,L^p(\R^{n-1},L^p(\R_+)))
	\cap L^p(J,H^1(\R^{n-1},L^p(\R_+))),
\end{align*}
for instance. In this case we have
\[
	\mbox{ind}\bigl(H^{1,(2,1)}_p(J\times \R^{n-1},L^p(\R_+))\bigr)
	=\frac12\left(1-\frac{2+n-1}{p}\right)
	=\frac12-\frac{n+1}{2p}.
\]

Now, let $J=(0,T)$. By the {\itshape mixed derivative theorem}, see e.g.\
\cite[Lemma~4.3]{Denk-Saal-Seiler08}, we have
\[
	H^{2}_p(J,W^{1-1/p}_p(\R^{n-1}))\cap
	L^p(J,W^{5-1/p}_p(\R^{n-1}))\hook H^1(J,W^{3-1/p}(\R^{n-1})).
\]
This yields
\begin{equation}\label{est-ht}
\begin{split}
	\partial_t \eta &\in W^{5/4 - 1/4p}_p(J,L_p(\R^{n-1}))
	\cap L_p(J,W^{3 - 1/p}_p(\R^{n-1})) \\
	&\hook W^{1-1/2p}_p(J,L^p(\R^{n-1}))
	\cap L^p(J,W^{2-1/p}_p(\R^{n-1}))\\
	&=W^{2	- 1/p, (2,1)}_p(J\times \R^{n-1})
\end{split}
\end{equation}
for $\eta \in \mE_3$. Again by the mixed derivative theorem we have
\begin{equation*}
	H^{2}_p(J,W^{1-1/p}_p(\R^{n-1})) \cap L^p(J,W^{5 -
	1/p}_p(\R^{n-1})) \hookrightarrow W^{2- 1/2p}_p(J,H^1_p(\R^{n-1})),
\end{equation*}
which gives us
\begin{equation}\label{est-hx}
	\begin{split}
		\partial_j \eta &\in W^{2 - 1/2p}_p(J,L_p(\R^{n-1}))
		\cap L_p(J,W^{4 - 1/p}_p(\R^{n-1}))\\
	 	&= W^{4 - 1/p, (2,1)}_p(J \times
		\R^{n-1})
	\end{split}
\end{equation}
for $\eta \in \mE_3$ and $j = 1,\,\dots,\,n - 1$. Analogously we obtain that
\begin{equation}\label{est-hxy}
	\begin{split}
	\partial_j \partial_k \eta &\in W^{3/2 - 1/2p}_p(J,L_p(\R^{n-1}))
	\cap L_p(J,W^{3 - 1/p}_p(\R^{n-1})) \\
	&= W^{3 - 1/p, (2,1)}_p(J \times \R^{n-1})\\
	&\hook\ W^{2 - 1/p, (2,1)}_p(J \times \R^{n-1})
	\end{split}
\end{equation}
for $\eta \in \mE_3$ and $j,\,k = 1,\,\dots,\,n - 1$.

For the velocity we have
\begin{equation}\label{est-u}
	                     v  \in H^{2,(2,1)}_p(J \times \R^n_+)
			     \hookrightarrow H^{2,(2,1)}_p(J \times
			     \R^{n-1},L^p(\R_+)).
\end{equation}			
Another application of the mixed derivative theorem yields
\begin{align}
		  \partial_j v & \in H^{1, (2,1)}_p(J \times \R^n_+)
		  \hookrightarrow H^{1,(2,1)}_p(J \times
		  \R^{n-1},L_p(\R_+)),\label{est-ux} \\
	\partial_j\partial_k v & \in L_p(J \times \R^n_+)
	= L_p(J \times \R^{n-1},L_p(\R_+)),\label{est-uxy}
\end{align}
for $j,\,k = 1,\,\dots,\,n$. Taking trace this also implies
\begin{align}
	v|_{\partial\R^n_+} & \in W^{1 - 1/2p}_p(J,L_p(\R^{n-1}))
	\cap L_p(J,W^{2 - 1/p}_p(\R^{n-1}))\nonumber\\
	&= W^{2 - 1/p, (2,1)}_p(J \times \R^{n-1})\label{est-ux-tr1}\\
	\partial_j v|_{\partial\R^n_+}
	& \in W^{1/2 - 1/2p}_p(J,L_p(\R^{n-1}))
	\cap L_p(J,W^{1 - 1/p}_p(\R^{n-1}))\nonumber\\
	&= W^{1 - 1/p, (2,1)}_p(J \times \R^{n-1})\label{est-ux-tr2}
\end{align}
for $j = 1,\,\dots,\,n$.

Now, we denote by $L$ the linear operator on the left hand side
of system (\ref{tfsi}) and by $N=(F_v,G,0,0,H_\eta,0,0,0)$
its nonlinear right-hand side. Then (\ref{tfsi}) is reformulated as
\[
	L(v,p,\eta)=N(v,p,\eta)+(0,0,0,0,0,v_0,\eta_0,\eta_1).
\]
We also set
\begin{align*}
	\widetilde\mE&:=\mE_v\times\mE_p\times\mE_\eta,\\
	\widetilde\mF&:=\mF_v\times\mF_g\times\{0\}\times\{0\}
	      \times \gamma_0\mE_p
	      \times\gamma_0^t\mE_v\times\gamma_0^t\mE_\eta
	      \times\gamma_1^t\mE_\eta.
\end{align*}
The nonlinearity admits the following properties.
%%%%%%%%%%%%%%%%%%%%%%%%%%%%%%%%%%%%%%%%%%%%%%%%%%%%%%%%%%%%%%%%%%%%%%%%
\begin{theorem}\label{mappropnl}
Let $p\ge(n+2)/3$. Then $N\in C^\omega(\widetilde\mE,\widetilde\mF)$,
$N(0)=0$, and we have $DN(0)=0$ for the Fr\'echet derivative of $N$.
\end{theorem}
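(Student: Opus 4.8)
The plan is to exploit that every summand of $F_v$, $G$ and $H_\eta$ is a product of the components of $(v,p,\eta)$ and of their derivatives, of total degree at least two. Hence $N$ is a finite sum of continuous multilinear maps of degree $2$ and $3$ from $\widetilde\mE$ into the relevant target spaces; each such map is a bounded homogeneous polynomial, so it is real analytic (its Taylor expansion at any point is finite), and a finite sum of real analytic maps is real analytic, i.e.\ $N\in C^\omega(\widetilde\mE,\widetilde\mF)$. Since every monomial has degree $\ge 2$, we automatically obtain $N(0)=0$ and $DN(0)=0$. Thus everything reduces to proving that each multilinear summand is \emph{bounded} into the appropriate component of $\widetilde\mF$, i.e.\ $F_v$ into $\mF_v$, $G$ into $\mF_g$, and $H_\eta$ into $\gamma_0\mE_p$ (the remaining components of $\widetilde\mF$ are $\{0\}$ or initial-value spaces and carry no nonlinearity). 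This is the situation of \cite[Section~5.2, Proposition~5.6]{koehnesaal}, and I would follow that scheme, using the embeddings \eqref{est-ht}--\eqref{est-ux-tr2} together with the multiplication and Sobolev embedding theorems for anisotropic Sobolev spaces from \cite{koehnesaal}.

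For $F_v\to\mF_v=L^p(J\times\R^n_+)$, the $\eta$-factors are of high regularity: by \eqref{est-hx}, $\nabla'\eta\in W^{4-1/p,(2,1)}_p(J\times\R^{n-1})$ has positive Sobolev index whenever $p>(n+2)/4$, hence $\nabla'\eta$ and $|\nabla'\eta|^2$ lie in $L^\infty(J\times\R^{n-1})$; since $p\ge(n+2)/3$, this applies. The summands $-2(\nabla'\eta\cdot\nabla')\partial_n v$, $|\nabla'\eta|^2\partial_n^2 v$ and $(\nabla'\eta,0)^\tau\partial_n p$ are then bounded into $L^p$ by pulling out the $\eta$-factor in $L^\infty$ and using $\nabla'\partial_n v,\partial_n^2 v\in L^p(J;L^p)$ for $v\in\mE_v$ and $\partial_n p\in L^p(J;L^p)$ for $p\in\mE_p$. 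The delicate summands are $(\partial_t\eta-\Delta'\eta)\partial_n v$, the trilinear term $(v'\cdot\nabla'\eta)\partial_n v$, and the convective term $-(v\cdot\nabla)v$. For $(v\cdot\nabla)v$ I would write, by H\"older, $\|v^j\partial_j v\|_{L^p(J\times\R^n_+)}\le\|v\|_{L^{q_1}}\|\partial_j v\|_{L^{q_2}}$ with $\tfrac1{q_1}+\tfrac1{q_2}=\tfrac1p$, and use the anisotropic embeddings $\mE_v=H^{2,(2,1)}_p(J\times\R^n_+)\hookrightarrow L^{q_1}$ and $\partial_j v\in H^{1,(2,1)}_p(J\times\R^n_+)\hookrightarrow L^{q_2}$ from \eqref{est-ux}, which hold when $2\ge(n+2)(\tfrac1p-\tfrac1{q_1})$ and $1\ge(n+2)(\tfrac1p-\tfrac1{q_2})$; these two conditions are compatible exactly when $p\ge(n+2)/3$, with $q_1=n+2$, $q_2=(n+2)/2$ at the endpoint (both finite, so the critical $L^q$-embeddings suffice). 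The term $(v'\cdot\nabla'\eta)\partial_n v$ reduces to the same estimate after pulling $\nabla'\eta$ out in $L^\infty$; and $\partial_t\eta\cdot\partial_n v$, $\Delta'\eta\cdot\partial_n v$ are treated likewise, regarding $\partial_n v\in H^{1,(2,1)}_p(J\times\R^{n-1};L^p(\R_+))$ and, by \eqref{est-ht}, \eqref{est-hxy}, $\partial_t\eta\in W^{2-1/p,(2,1)}_p$, $\Delta'\eta\in W^{3-1/p,(2,1)}_p$ on $J\times\R^{n-1}$, a H\"older split plus the corresponding $L^q$-embeddings again closing the estimate for $p\ge(n+2)/3$.

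For $G\to\mF_g$, the component $L^p(J;H^1_p(\R^n_+))$ requires bounding $\nabla'\eta\cdot\partial_n v'$ and its full spatial gradient in $L^p(J\times\R^n_+)$: the top-order contribution $\nabla'\eta\cdot\nabla\partial_n v'$ is handled by $\nabla'\eta\in L^\infty$ and $\nabla\partial_n v'\in L^p(J;L^p)$, while $\nabla'^2\eta\cdot\partial_n v'$ is estimated as the $\Delta'\eta$-term above (here the index count has room to spare, so even the anisotropic multiplication theorem applies). For the component $H^1_p(J;\dot H^{-1}_p(\R^n_+))$ I would use that $\eta$ is independent of $x_n$, so $G=\partial_n(\nabla'\eta\cdot v')$; since $\partial_n\colon L^p(\R^n_+)\to\dot H^{-1}_p(\R^n_+)$ is bounded, it suffices to bound $\nabla'\eta\cdot v'$ in $H^1_p(J;L^p(\R^n_+))$, where the only nontrivial term, $\nabla'\partial_t\eta\cdot v'$, is again closed by a H\"older split and the $L^q$-embeddings of $\nabla'\partial_t\eta\in W^{1-1/p,(2,1)}_p(J\times\R^{n-1})$ and $v'\in H^{2,(2,1)}_p(J\times\R^{n-1};L^p(\R_+))$ (cf.\ \eqref{est-u}), the exponent conditions once more amounting to $p\ge(n+2)/3$. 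Finally $H_\eta=-\gamma_0(\nabla'\eta\cdot\partial_n v')-\gamma_0(\nabla'\eta\cdot\nabla'v^n)$; since both $\nabla'\eta\cdot\partial_n v'$ and $\nabla'\eta\cdot\nabla'v^n$ lie in $L^p(J;H^1_p(\R^n_+))$ by the preceding estimates, boundedness of the spatial trace $\gamma_0\colon L^p(J;H^1_p(\R^n_+))\to L^p(J;W^{1-1/p}_p(\R^{n-1}))=\gamma_0\mE_p$ concludes.

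The actual work is the bookkeeping of Sobolev indices ensuring that each product lands in its target space; the expected main obstacle is that the critical terms — the convective term $(v\cdot\nabla)v$, the couplings $\partial_t\eta\cdot\partial_n v$, $\Delta'\eta\cdot\partial_n v$, $(v'\cdot\nabla'\eta)\partial_n v$, and $\nabla'\partial_t\eta\cdot v'$ — are all exactly borderline at $p=(n+2)/3$. The point is that each of them can be closed by a H\"older splitting into $L^{q_1}\cdot L^{q_2}$ with $q_1,q_2<\infty$, so that only the endpoint-admissible embeddings into $L^q$-spaces are needed and the endpoint-sensitive anisotropic multiplication theorem has to be avoided precisely for these terms; the remaining, subcritical products are treated either by an $L^\infty$ bound on the $\eta$-factor or directly by the multiplication theorem.
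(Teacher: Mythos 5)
Your overall scheme — decompose $N$ into finitely many bilinear/trilinear monomials, observe that analyticity, $N(0)=0$ and $DN(0)=0$ are automatic for bounded polynomial maps of degree $\ge 2$, and reduce everything to boundedness of each monomial from $\widetilde\mE$ into the corresponding component of $\widetilde\mF$ — is exactly the paper's strategy, and your bookkeeping of the anisotropic regularity of each factor (via \eqref{est-ht}--\eqref{est-ux-tr2}) and of the Sobolev indices agrees with the paper's; the critical value $p=(n+2)/3$ comes out the same way. The place where you genuinely deviate is the treatment of the borderline products: the paper invokes the multiplication theorem \cite[Theorem~1.7]{koehnesaal} directly (which is stated with a \emph{non-strict} inequality $\mathrm{ind}_1+\mathrm{ind}_2\ge\mathrm{ind}$ and therefore covers the endpoint $p=(n+2)/3$), whereas you propose to bypass it by a H\"older split into $L^{q_1}\cdot L^{q_2}$ followed by critical anisotropic Sobolev/Besov embeddings into $L^{q_i}$. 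Two remarks on this. First, the premise that the multiplication theorem ``has to be avoided'' at the endpoint is backwards: in the paper it is precisely \cite[Theorem~1.7]{koehnesaal} that resolves the equality case (only \cite[Theorem~1.9]{koehnesaal}, used for the subcritical products where one factor is pulled out in $L^\infty$, requires a strict inequality). Second, and more substantively, your H\"older route does close, but it quietly uses an endpoint embedding that is not automatic: the $\eta$-factors live in Sobolev--Slobodeckii (i.e.\ Besov $B^{s,(2,1)}_{pp}$) spaces, and the critical embedding $B^{s,(2,1)}_{pp}(J\times\R^{n-1})\hookrightarrow L^{q_1}(J\times\R^{n-1})$ with $s$ at the Sobolev-critical index and $q_1<\infty$ is a Franke--Jawerth-type result ($B^{s_0}_{p,p}\hookrightarrow F^{s_1}_{q,2}$ along the critical line for $p<q$), not the standard $H^s_p\hookrightarrow L^q$; for $p>2$ (which occurs at the endpoint once $n\ge 5$) the naive chain through $B^0_{q,p}$ fails. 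Moreover, since the second factor forces $q_1\ge n+1$ in your split, you cannot retreat to a subcritical choice of $q_1$, so you really do need the endpoint Besov--Lebesgue embedding. This step deserves an explicit citation and is arguably more delicate than applying \cite[Theorem~1.7]{koehnesaal} directly. Finally, two smaller cosmetic differences that are both fine: for the trilinear term $(v'\cdot\nabla'\eta)\partial_n v$ you pull $\nabla'\eta$ out in $L^\infty$ and then reuse the bilinear convective estimate (yielding $p\ge(n+2)/3$), while the paper runs a genuine trilinear estimate \eqref{3factoremb} giving the weaker $p\ge(2n+3)/6$; and for $H_\eta$ you estimate the products in $L^p(J;H^1_p(\R^n_+))$ and then take the boundary trace, while the paper first takes the traces $\partial_nv'|_{\partial\R^n_+},\ \nabla'v^n|_{\partial\R^n_+}\in W^{1-1/p,(2,1)}_p(J\times\R^{n-1})$ and multiplies on the boundary — both land in $\gamma_0\mE_p$.
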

%%%%%%%%%%%%%%%%%%%%%%%%%%%%%%%%%%%%%%%%%%%%%%%%%%%%%%%%%%%%%%%%%%%%%%%%
\begin{proof}
{\itshape Mapping properties of $F_v$.}
Gathering (\ref{est-ht}), (\ref{est-hxy}), and (\ref{est-ux}) we
can estimate the term
\begin{equation*}
	(\partial_t \eta -  \Delta' \eta)\,\partial_n v,
\end{equation*}
as desired, provided the vector-valued embedding
\begin{equation}
\begin{split}
	\label{emb-fu1}
	&\underbrace{W^{2 - 1/p, (2,1)}_p(J \times
	\R^{n-1})}_{\textrm{ind}_1 = 1 - \frac{n + 2}{2p}}
	\cdot \underbrace{H^{1, (2,1)}_p(J \times
	\R^{n-1},L^p(\R_+))}_{\textrm{ind}_2 = \frac{1}{2} - \frac{n +
	1}{2p}}\\
	&\hookrightarrow \underbrace{H^{0, (2,1)}_p(J \times
		\R^{n-1},L^p(\R_+))}_{\textrm{ind} = - \frac{n + 1}{2p}}
\end{split}
\end{equation}
does hold. Applying \cite[Theorem~1.7]{koehnesaal} this readily follows
if at least one of the two indices $\mbox{ind}_1$, $\mbox{ind}_2$
is non-negative.
The strictest condition to be fulfilled by
\cite[Theorem~1.7]{koehnesaal}, however, is
$\mbox{ind}_1 + \mbox{ind}_2 \geq \mbox{ind}$ in case that
both of the indices on the left-hand-side
are negative which can occur for small $p$.
It is easily seen that this condition is equivalent to
\begin{equation}\label{reqp1}
	p \geq \frac{n + 2}{3}.
\end{equation}
For the terms
\begin{equation*}
	2 (\nabla' \eta \cdot \nabla')\,\partial_n v,
	\quad |\nabla' \eta|^2 \partial^2_n v,
	\quad (\nabla' \eta,0)^\tau\,\partial_n p
\end{equation*}
we employ (\ref{est-hx}), (\ref{est-uxy}) and the vector-valued embeddings
\begin{equation}
\begin{split}
	\label{emb-fu2}
	&\big[ \underbrace{W^{4 - 1/p, (2,1)}_p(J \times
	\R^{n-1})}_{\textrm{ind}_1 = 2 - \frac{n + 2}{2p}} \big]^m \cdot
	\underbrace{H^{0, (2,1)}_p(J \times
	\R^{n-1},L^p(\R_+))}_{\textrm{ind}_2 = - \frac{n +
	1}{2p}}\\
	&\hookrightarrow \underbrace{H^{0, (2,1)}_p(J \times \R^{n-1},
	L^p(\R_+))}_{\textrm{ind} = - \frac{n + 1}{2p}}
\end{split}
\end{equation}
for $m = 1,\,2$.
Due to \cite[Theorem~1.9]{koehnesaal} the above embeddings are valid,
provided that $\mbox{ind}_1 > 0$ or, equivalently,
\begin{equation}\label{reqp1weak}
	p > \frac{n + 2}{4}.
\end{equation}
Next, (\ref{est-u}) and (\ref{est-ux}) show that we obtain the desired
estimate of the term $(v \cdot \nabla)v$, if
\begin{equation*}
	\underbrace{H^{2,(2,1)}_p(J \times \R^n_+)}_{\textrm{ind}_1 = 1
	- \frac{n + 2}{2p}} \cdot \underbrace{H^{1, (2,1)}_p(J \times
	  \R^n_+)}_{\textrm{ind}_2 = \frac{1}{2} - \frac{n + 2}{2p}}
		\hookrightarrow \underbrace{H^{0, (2,1)}_p(J \times
		\R^n_+)}_{\textrm{ind} = - \frac{n + 2}{2p}}.
\end{equation*}
This is guaranteed by \cite[Theorem~1.7]{koehnesaal}
if $\max\,\{\,\mbox{ind}_1,\,\mbox{ind}_2\,\} \geq 0$.
Again, for small values of $p$ both of the indices on the left-hand-side can
become negative. Then \cite[Theorem~1.7]{koehnesaal}
implies the embedding above if
$\mbox{ind}_1 + \mbox{ind}_2 \geq \mbox{ind}$,
which is equivalent to (\ref{reqp1}).

Thanks to (\ref{est-hx}) and (\ref{est-ux}) the term
$(v' \cdot \nabla' \eta)\partial_nv$
can be estimated by utilizing the embedding
\begin{equation}\label{3factoremb}
\begin{split}
	&\underbrace{H^{1, (2,1)}_p(J \times
	\R^{n-1},H^1_p(\R_+))}_{\textrm{ind}_1 = \frac{1}{2} - \frac{n +
	1}{2p}}\cdot
	\underbrace{W^{4 - 1/p, (2,1)}_p(J \times
	\R^{n-1})}_{\textrm{ind}_2 = 2 - \frac{n + 2}{2p}}\\
	&\cdot
	\underbrace{H^{1, (2,1)}_p(J \times
	\R^{n-1},L^p(\R_+))}_{\textrm{ind}_3 = \frac{1}{2} - \frac{n +
	1}{2p}}
	\quad\hookrightarrow\quad \underbrace{H^{0, (2,1)}_p(J \times \R^{n-1},
	L^p(\R_+))}_{\textrm{ind} = - \frac{n + 1}{2p}}.
\end{split}
\end{equation}
Note that here we also employ
\[	
	H^{2, (2,1)}_p(J \times
	\R^{n}_+)
	\ \hook \ H^{1, (2,1)}_p(J \times
	\R^{n-1},H^1_p(\R_+))
\]
and $H^1_p(\R_+)\cdot L^p(\R_+)\hook L^p(\R_+)$ which is valid
due to the Sobolev embedding $H^1_p(\R_+)\hook L^\infty(\R_+)$ for $p>1$.
Thanks to \cite[Theorem~1.7]{koehnesaal} (\ref{3factoremb}) holds, if
$\min\,\{\,\mbox{ind}_1,\,\mbox{ind}_2,\,\mbox{ind}_3\,\} \geq 0$.
If at least one of
the three indices on the left-hand side is negative, then the
sum of the negative indices on the left hand side has to exceed
the index on the right hand side. The most restrictive constraint
hence results from
$\mbox{ind}_1 + \mbox{ind}_2 + \mbox{ind}_3 \geq \mbox{ind}$,
which is fullfilled if
\begin{equation}\label{anothconstr}
	p\ge\frac{2n+3}{6}.
\end{equation}
Consequently, by our assumptions $F_v$ has the desired mapping properties,
since (\ref{reqp1}) also yields (\ref{reqp1weak})
and (\ref{anothconstr}).
%to be true.

{\itshape Mapping properties of $G$.}
First we show $G(v,\eta) \in H^1_p(J,\,\dot{H}^{-1}_p(\bR^n_+))$.
Integration by parts yields
$\partial_n \in \sL(L_p(J \times
\bR^n_+),\,L_p(J,\,\dot{H}^{-1}_p(\bR^n_+)))$.
Using this property
and the fact that $\eta$ does not depend on $x_n$,
it is sufficient to estimate the terms
\begin{equation*}
	\partial_t \nabla'\eta \cdot v',
	\ \nabla' \eta \cdot \partial_t v'
\end{equation*}
in $L_p(J \times \bR^n_+)$.
Thanks to (\ref{est-ht}) and the mixed derivative theorem
we know
\[
	\partial_t \nabla'\eta \in W^{1 - 1/p, (2, 1)}_p(J \times
	\R^{n-1}).
\]
The first term can thus be estimated by the vector-valued embedding
\begin{align*}
	&\underbrace{W^{1 - 1/p, (2,1)}_p(J \times \R^{n-1})}_{\textrm{ind}_1 =
	\frac12- \frac{n + 2}{2p}} \cdot \underbrace{H^{2, (2,1)}_p(J \times
	\R^{n-1},\,L^p(\R_+))}_{\textrm{ind}_2 = 1 - \frac{n + 1}{2p}}\\
		&\hookrightarrow \underbrace{H^{0, (2,1)}_p(J \times
		\R^{n-1},\,L^p(\R_+))}_{\textrm{ind}
		= - \frac{n +1}{2p}}.
\end{align*}
According to \cite[Theorem~1.7]{koehnesaal} this embedding is again valid,
if we have $\max\,\{\,\mbox{ind}_1,\,\mbox{ind}_2\,\} \geq 0$ or if
$\mbox{ind}_1 + \mbox{ind}_2 \geq \mbox{ind}$ in case that both
indices on the left hand side are negative.
The latter condition is again equivalent to (\ref{reqp1}).

The second term may be estimated by employing (\ref{est-hx}),
the vector-valued embedding (\ref{emb-fu2}) for $m = 1$,
and $\partial_t v\in L^p(J\times\R^{n-1},L^p(\R_+))$
under constraint (\ref{reqp1weak}).

To see that also $G(v,\eta) \in L^p(J,H^1_p(\R^n_+))$,
we estimate the terms
\begin{equation*}
	\partial_j \nabla' \eta \cdot \partial_n v',
	\ \nabla'\eta \cdot \partial_j \partial_n v',
	\ \nabla'\eta \cdot \partial^2_n v',
	\qquad j = 1,\,\dots,\,n - 1,
\end{equation*}
in $L^p(J \times \R^n_+)$.
Similar as above this may be accomplished by utilizing
(\ref{est-hx}), (\ref{est-hxy}), (\ref{est-ux}), (\ref{est-uxy})
in combination with the vector-valued embeddings
(\ref{emb-fu1}), and (\ref{emb-fu2}).
Once more this is feasible if (\ref{reqp1}) holds.

{\itshape Mapping properties of $H_\eta$.}
Note that $W^{1 - 1/p, (2,1)}_p(J\times \R^{n-1}))\,\hook\, \gamma_0\mE_p$.
Hence, according to (\ref{est-hx}) and (\ref{est-ux}) we can estimate the terms
\begin{equation*}
	-\nabla'\eta \cdot \partial_n v',\
	-\nabla'\eta \cdot \nabla'v^n
\end{equation*}
as desired provided that the embedding
\begin{align*}
	&\big[ \underbrace{W^{4 - 1/p, (2,1)}_p(J \times
	\R^{n-1})}_{\textrm{ind}_1 = 2 - \frac{n + 2}{2p}} \big] \cdot
	\underbrace{W^{1 - 1/p, (2,1)}_p(J \times
	\R^{n-1}))}_{\textrm{ind}_2 = \frac{1}{2} - \frac{n + 2}{2p}}\\
	&\hookrightarrow \underbrace{W^{1 - 1/p, (2,1)}_p(J
		\times \R^{n-1}))}_{\textrm{ind}
		= \frac{1}{2} - \frac{n + 2}{2p}}
\end{align*}
is at our disposal. By \cite[Theorem~1.9]{koehnesaal} this is the case
if $\mbox{ind}_1 > 0$.
Hence, the nonlinearity $H_\eta$ has the desired mapping properties,
provided that $p>(n+2)/4$.
This, in turn, is true since (\ref{reqp1}) is satisfied.

Altogether we have proved the asserted embeddings, i.p.\ that
$N(\widetilde\mE)\subset \widetilde\mF$.
The claimed smoothness of $N$ as well as $N(0)=0$ and
$DN(0)=0$ follow obviously by the fact that
$N$ consists of polynomial nonlinearities which are of quadratic
or higher order.
\end{proof}
%%%%%%%%%%%%%%%%%%%%%%%%%%%%%%%%%%%%%%%%%%%%%%%%%%%%%%%%%%%%%%%%%%%%%%%%
For a Banach space $E$ we denote by $B_E(x,r)$ the open ball in $E$ with
radius $r>0$ centered in $x\in E$.
Based on Theorems~\ref{2.2} and \ref{mappropnl} we can derive
well-posedness of (\ref{tfsi}) for small data.
For simplicity we also set
\begin{align*}
	\mE&:=\left\{(v,p,\eta)\in\mE_v\times\mE_p\times\mE_\eta;
	\ \partial_t\eta=v^n,\ v'=0\ \text{on } \partial\R^n_+\right\},\\
	\mF&:= \biggl\{(f_v,g,0,0,f_\eta,v_0,\eta_0,\eta_1)
	       \in\widetilde\mF;\
	      f_v,g,0,0,f_\eta,v_0,\eta_0,\eta_1\text{ satisfy}\biggr.\\
	   &\biggl.\qquad
	   \text{the compatibility conditions (C1)-(C4)}\biggr\}.
\end{align*}

%%%%%%%%%%%%%%%%%%%%%%%%%%%%%%%%%%%%%%%%%%%%%%%%%%%%%%%%%%%%%%%%%%%%%%%%
\begin{theorem}\label{mainnonlinsys}
Let $p\ge (n+2)/3$ and $T>0$. Then there is a $\kappa=\kappa(T)>0$ such
that for $(f_v,g,0,0,f_\eta,v_0,\eta_0,\eta_1)\in B_{\widetilde\mF}(0,\kappa)$
satisfying the compatibility conditions (C2)-(C4) and
\begin{equation}\label{nonlincd}
	\div v_0=\nabla'\eta_0\cdot\partial_n v'_0+g|_{t=0}
	\quad \text{in } \dot{H}^{-1}_p(\R^n_+)
\end{equation}
there is a unique solution $(v,p,\eta)\in\mE$ of system (\ref{tfsi}).
The solution depends continuously on the data.
\end{theorem}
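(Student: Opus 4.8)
The plan is to set up a fixed-point argument in the spirit of the standard quasilinear theory, using the linear maximal regularity result of Theorem~\ref{2.2} (in its finite-time form, Remark~\ref{remark-finite-time}) together with the nonlinearity estimates of Theorem~\ref{mappropnl}. First I would observe that, given the data in $B_{\widetilde\mF}(0,\kappa)$ satisfying (C2)--(C4) and the modified divergence compatibility condition \eqref{nonlincd}, the pair $(v,p,\eta)\in\mE$ solves the quasilinear system \eqref{tfsi} if and only if it is a fixed point of the map $\Phi := L^{-1}\circ\big(N(\cdot)+(0,0,0,0,0,v_0,\eta_0,\eta_1)\big)$, where $L^{-1}$ denotes the solution operator of the linear problem \eqref{linfsi} furnished by Theorem~\ref{2.2}. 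The subtlety here is that $L^{-1}$ requires the right-hand side to lie in $\mF$, i.e.\ to satisfy the \emph{linear} compatibility conditions (C1)--(C4); so I must check that for $(v,p,\eta)\in\mE$ with the prescribed initial data, the image $N(v,p,\eta)+(0,\dots,0,v_0,\eta_0,\eta_1)$ does satisfy (C1)--(C4). Conditions (C2), (C3) hold since they are imposed on the data directly and the constraint $\partial_t\eta=v^n$, $v'=0$ on $\partial\R^n_+$ is built into the definition of $\mE$; condition (C1) is exactly \eqref{nonlincd} once one reads off that $G(v,\eta)|_{t=0}=\nabla'\eta_0\cdot\partial_n v_0'$; and for (C4) one takes $\eta_*=\eta$ and uses that $(G(v,\eta),\partial_t\eta)$ has the required regularity, which is part of the content of Theorem~\ref{mappropnl} together with the structure of $G$.

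Next I would make the smallness quantitative. By Theorem~\ref{2.2} the operator $L^{-1}\colon\mF\to\mE_v\times\mE_p\times\mE_\eta$ is bounded, say with norm $C_L$; by Theorem~\ref{mappropnl}, $N\in C^\omega(\widetilde\mE,\widetilde\mF)$ with $N(0)=0$ and $DN(0)=0$, so for every $\rho>0$ there is $C(\rho)$ with $\|N(z)\|_{\widetilde\mF}\le C(\rho)\|z\|_{\widetilde\mE}^2$ and $\|N(z_1)-N(z_2)\|_{\widetilde\mF}\le C(\rho)(\|z_1\|_{\widetilde\mE}+\|z_2\|_{\widetilde\mE})\|z_1-z_2\|_{\widetilde\mE}$ for $z,z_1,z_2$ in the ball $B_{\widetilde\mE}(0,\rho)$; this is the standard consequence of analyticity and the vanishing of the derivative at $0$. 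Choosing first $\rho$ small and then $\kappa$ small (both depending on $T$ through $C_L$ and $C(\rho)$), one checks that $\Phi$ maps the closed ball $\overline{B_{\mE}(0,r)}$ into itself and is a strict contraction there, for a suitable $r\le\rho$: indeed $\|\Phi(z)\|_{\mE}\le C_L(C(\rho)r^2+\kappa)$ and $\|\Phi(z_1)-\Phi(z_2)\|_{\mE}\le 2C_LC(\rho)r\|z_1-z_2\|_{\mE}$, so taking $r$ with $4C_LC(\rho)r\le 1$ and then $\kappa\le r/(2C_L)$ suffices. The Banach fixed-point theorem then yields a unique fixed point in that ball, which is the desired solution; uniqueness in all of $\mE$ follows by absorbing the difference of two solutions into the contraction estimate, after shrinking $\kappa$ if necessary so that both solutions automatically lie in the small ball (using continuous dependence).

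Finally, continuous dependence on the data is immediate from the fixed-point construction: writing $z=\Phi_{\mathrm{data}}(z)$ and differencing two problems with data differing by $\delta$ in $\mF$, one gets $\|z_1-z_2\|_{\mE}\le C_L\|\delta\|_{\mF}+2C_LC(\rho)r\|z_1-z_2\|_{\mE}$, and the contraction factor absorbs the second term.

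I expect the main obstacle to be the bookkeeping around the compatibility conditions: the linear theorem \eqref{linfsi} demands (C1)--(C4) for its right-hand side, but the right-hand side in the nonlinear iteration is $N(v,p,\eta)$, which depends on the current iterate, so one has to verify that these conditions are \emph{preserved} along the iteration rather than merely holding at the initial guess. Concretely, (C1) becomes the modified condition \eqref{nonlincd} precisely because $g$ is replaced by $g+G(v,\eta)$ and $G(v,\eta)|_{t=0}=\nabla'\eta_0\cdot\partial_n v_0'$ depends only on the (fixed) initial data, so it is in fact a condition on the data alone and is stable; verifying (C4) with $\eta_*=\eta$ requires knowing $(G(v,\eta),\partial_t\eta)\in H^1_p(J;\dot H_{p,0}^{-1}(\R^n_+))$, which one reads from the mapping properties of $G$ established in the proof of Theorem~\ref{mappropnl} together with $\partial_t\eta=v^n$. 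Once this is in place, the contraction argument itself is routine.
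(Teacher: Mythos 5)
Your overall strategy — reformulate \eqref{tfsi} as a fixed-point problem $\Phi(z)=L^{-1}(N(z)+f)$ on a small ball and apply Banach's fixed point theorem, using Theorem~\ref{2.2} for the isomorphism $L\in\sLis(\mE,\mF)$ and Theorem~\ref{mappropnl} for the quadratic smallness of $N$ near $0$ — is exactly the paper's. The self-map and contraction estimates, and the continuous dependence, are handled in the same way; the paper invokes the mean value theorem on $DN$ rather than an explicit quadratic bound, but that is a cosmetic difference.

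Where your argument has a real gap is in the verification of (C4) for the right-hand side $N(v,p,\eta)+f$. You propose taking $\eta_*:=\eta$ (the current iterate) and claim $(g+G(v,\eta),\partial_t\eta)\in H^1_p(J;\dot H_{p,0}^{-1}(\R^n_+))$ follows from the mapping properties of $G$ together with $\partial_t\eta=v^n$. This does not close for two reasons. First, the hypothesis (C4) only provides some $\eta_*^{(0)}$ with $(g,\partial_t\eta_*^{(0)})\in H^1_p(J;\dot H_{p,0}^{-1}(\R^n_+))$; swapping $\eta_*^{(0)}$ for $\eta$ would require $(0,\partial_t(\eta-\eta_*^{(0)}))\in H^1_p(J;\dot H_{p,0}^{-1}(\R^n_+))$, and the boundedness of the boundary functional $\phi\mapsto -\int_{\R^{n-1}}\psi\,\gamma_0\phi\,dx'$ on $\dot H^1_{p'}(\R^n_+)$ is not automatic for a generic $\psi=\partial_t(\eta-\eta_*^{(0)})$ coming from arbitrary iterates. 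Second, Theorem~\ref{mappropnl} only gives $G(v,\eta)\in\mF_g$, i.e.\ $G\in H^1_p(J;\dot H^{-1}_p(\R^n_+))$, whereas (C4) needs the strictly smaller target $\dot H^{-1}_{p,0}(\R^n_+)$, the dual of $\dot H^1_{p'}(\R^n_+)$ without boundary conditions. The paper resolves both issues at once by keeping the $\eta_*$ from the hypothesis and showing the additional contribution $(G(v,\eta),0)=(\nabla'\eta\cdot\partial_n v',0)$ lies in $H^1_p(J;\dot H_{p,0}^{-1}(\R^n_+))$; the decisive mechanism is the vanishing of the \emph{tangential} trace $v'|_{\partial\R^n_+}=0$ built into $\mE$ (not the kinematic constraint $\partial_t\eta=v^n$), which allows an integration by parts in $x_n$ and reduces the claim to the product estimate $\nabla'\eta\cdot v'\in H^1_p(J;L^p(\R^n_+))$, established via \cite[Theorem~1.9]{koehnesaal} under $p>(n+2)/4$. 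Add this step to your outline and the proof matches the paper's.
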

%%%%%%%%%%%%%%%%%%%%%%%%%%%%%%%%%%%%%%%%%%%%%%%%%%%%%%%%%%%%%%%%%%%%%%%%
\begin{proof}
We pick $f:=(f_v,g,0,0,f_\eta,v_0,\eta_0,\eta_1)$ as assumed.
System (\ref{tfsi}) (including exterior forces) reads as
\begin{equation}\label{fullnlsys}
	L(v,p,\eta)=N(v,p,\eta)+f.
\end{equation}
We first have to verify that the right hand side belongs to $\mF$.
Observe that (\ref{nonlincd}) gives (C1). Hence,
by our assumptions the compatibility conditions (C1)-(C3) are
satisfied.
To see compatibility condition (C4) we have to verify that
there exists an $\eta_*\in\mE_\eta$ satisfying
$(\eta_*,\partial_t\eta_*)|_{t=0}=(\eta_0,\eta_1)$ and
\[
	(g+G(v,\eta),\partial_t\eta_*)\in
	H^1_{p}(J;\dot{H}^{-1}_{p,0}(\R^n_+))
\]
for every triple $(v,p,\eta)\in\mE$ such that
$(v,\eta,\partial_t\eta)|_{t=0}=(v_0,\eta_0,\eta_1)$.
Note that by assumption there is an extension $\eta_*\in\mE_\eta$
with the prescribed traces such that
\[
	(g,\partial_t\eta_*)\in
	H^1_{p}(J;\dot{H}^{-1}_{p,0}(\R^n_+)).
\]
Hence it suffices to prove that
\begin{equation}\label{nonlinginhm1}
	(\nabla'\eta\cdot\partial_n v',0)\in
	H^1_{p}(J;\dot{H}^{-1}_{p,0}(\R^n_+))
\end{equation}
For $\phi\in \dot{H}^1_{p}(\R^n_+)$ we observe
that thanks to $v'(x',0)=0$ we obtain
\[
	\int_{\R_+}\phi(x)\nabla'\eta(x')\cdot\partial_n v'(x)\,dx_n
	=-\int_{\R_+}\nabla'\eta(x')\cdot v'(x)\partial_n\phi(x)\,dx_n.
\]
In order to deduce (\ref{nonlinginhm1})
it hence suffices to prove that
\[
	\nabla'\eta\cdot v'
	\in H^1_{p}(J;L^p(\R^n_+)).
\]
Thanks to (\ref{est-hx}) and (\ref{est-u}) this follows from the embedding
\begin{align*}
	&W^{4-1/p,(2,1)}_p(J\times \R^{n-1})
	\cdot H^{2,(2,1)}_p(J\times \R^{n-1};L^p(\R_+))\\
	&\hook\, H^{2,(2,1)}_p(J\times \R^{n-1};L^p(\R_+))
	\,\hook\, H^1_{p}(J;L^p(\R^n_+)).
\end{align*}
Applying once again \cite[Theorem~1.9]{koehnesaal} we see that
this is fulfilled if $\mbox{ind}\bigl(W^{4-1/p,(2,1)}_p(J\times
\R^{n-1})\bigr)>0$.
This, in turn, holds if $p>(n+2)/4$ which is implied by our
assumption $p\ge(n+2)/3$.
Thus, (\ref{nonlinginhm1}) follows.

Altogether we have proved that
$(f_v,g,0,0,f_\eta,v_0,\eta_0,\eta_1)\in B_{\widetilde\mF}(0,\kappa)$
satisfying the compatibility conditions (C2)-(C4) and
(\ref{nonlincd}) implies that $N(w)+f\in\mF$ for
$w\in\overline{B_\mE(0,r)}$.
Hence, (\ref{fullnlsys}) is well-defined.

Now, we set
\[
	K(w)=L^{-1}(N(w)+f), \quad w\in\overline{B_\mE(0,r)}
\]
and prove that it is a contraction on $\overline{B_\mE(0,r)}$
for $r>0$ small enough.
Theorem~\ref{2.2} yields that $L\in\sLis(\mE,\mF)$.
This and the mean value theorem imply
\begin{align*}
	\|K(w)-K(z)\|_\mE&\le C\|N(w)-N(z)\|_\mE\\
	&\le C\sup_{v\in B_\mE(0,r)}\|DN(v)\|_{\sL(\mE,\widetilde{\mF})}
	\|w-z\|_\mE\quad (w,z\in \overline{B_\mE(0,r)}).
\end{align*}
Fixing $r>0$ such that
$\sup_{v\in B_\mE(0,r)}\|DN(v)\|_{\sL(\mE,\widetilde{\mF})}\le 1/2C$,
which is possible thanks to Theorem~\ref{mappropnl},
we see that $K$ is contractive.
The estimate above and Theorem~\ref{mappropnl} also imply
\begin{align*}
	\|K(w)\|_\mE
	&\le \|K(w)-K(0)\|_\mE+ C\|f\|_\mF\\
	&\le \frac{r}2+ C\kappa
	\quad (w\in \overline{B_\mE(0,r)}).
\end{align*}
Choosing $\kappa\le r/2C$ we see that $K$ is indeed a contraction
on $\overline{B_\mE(0,r)}$.
The contraction mapping principle gives the result.
\end{proof}
%%%%%%%%%%%%%%%%%%%%%%%%%%%%%%%%%%%%%%%%%%%%%%%%%%%%%%%%%%%%%%%%%%%%%%%%
By the equivalence of the systems (\ref{fsi}) and (\ref{tfsi})
given through the diffeomorphic transform introduced in
Section~\ref{sectrans}, it is clear that Theorem~\ref{mainnonlinsys}
implies our main result Theorem~\ref{main}.

%%%%%%%%%%%%%%%%%%%%%%%%%%%%%%%%%%%%%%%%%%%%%%%%%%%%%%%%%%%%%%%%%%%%%%%%
%%%%%%%%%%%%%%%%%%%%%%%%%%%%%%%%%%%%%%%%%%%%%%%%%%%%%%%%%%%%%%%%%%%%%%%%
\bibliographystyle{abbrv}
\bibliography{fl_str_int}

\end{document}